\documentclass[12pt,reqno]{amsart}
\usepackage{setspace}
\usepackage{amsmath, amssymb, amscd, amsthm, amsfonts}
\usepackage{graphicx}
\usepackage{enumitem}
\usepackage{mathdots}
\usepackage{mathpazo}

\oddsidemargin 0pt
\evensidemargin 0pt
\marginparwidth 40pt
\marginparsep 10pt
\topmargin -20pt
\headsep 10pt

\textheight 8.7in

\textwidth 6.35in

\linespread{1.2}

\usepackage{indentfirst}
\usepackage{amssymb,amsmath,amsfonts,amsthm}
\usepackage[all]{xy}
\usepackage{enumerate}
\usepackage{mathrsfs}
\usepackage{graphicx}
\numberwithin{equation}{section}
\usepackage[all]{xy}
\usepackage{amssymb,amscd,amsthm,amsmath,graphicx,color}
\usepackage{mathrsfs}
\usepackage[english]{babel}
\usepackage[utf8x]{inputenc}

\newtheorem{theorem}{Theorem}[section]
\newtheorem*{proposition*}{Proposition}
\newtheorem{proposition}[theorem]{Proposition}
\newtheorem{corollary}[theorem]{Corollary}
\newtheorem{lemma}[theorem]{Lemma}
\newtheorem{example}[theorem]{Example}
\newtheorem{remark}[theorem]{Remark}
\newtheorem{definition}[theorem]{Definition}
\newtheorem{question}[theorem]{Question}

\newtheorem{notation}[theorem]{Notation}

\DeclareMathOperator{\Hom}{Hom}
\DeclareMathOperator{\Ext}{Ext}
\DeclareMathOperator{\Tor}{Tor}

\DeclareMathOperator{\coker}{coker}

\DeclareMathOperator{\Supp}{Supp}
\DeclareMathOperator{\depth}{depth}

\DeclareMathOperator{\pd}{pd}
\DeclareMathOperator{\cd}{cd}

\DeclareMathOperator{\Ass}{Ass}

\DeclareMathOperator{\ann}{ann}
\DeclareMathOperator{\im}{im}
\DeclareMathOperator{\Gpd}{Gpd}

\title[Generalized Hartshorne's problem]{Generalized Hartshorne's problem on finiteness properties of local cohomology modules}

\author[A. Dosea]{Andr\'e Dosea}
\address{Departamento de Matemática, Universidade Federal da Paraíba - 58051-900, João Pessoa, PB, Brazil}
\email{andredosea@hotmail.com}

\author[\,R. Holanda]{\,Rafael Holanda}
\address{SISSA (Scuola Internazionale Superiore di Studi Avanzati), Via Bonomea 265, 34136 Trieste, Italy; and
Departamento de Matemática, Universidade Federal da Paraíba - 58051-900, João Pessoa, PB, Brazil}
\email{rferreir@sissa.it, rf.holanda@gmail.com}

\author[\,C. B. Miranda-Neto]{\,Cleto B. Miranda-Neto}
\address{Departamento de Matemática, Universidade Federal da Paraíba - 58051-900, João Pessoa, PB, Brazil}
\email{cleto@mat.ufpb.br}

\date{\today}

\keywords{Local cohomology, weak cofiniteness, associated prime, Hartshorne's problem, Huneke's conjecture.}
\subjclass[2020]{Primary 13D45, 13D02, 13D07, 13D05, 18G40; Secondary 13C13, 13C60, 14B15}


\begin{document}

\maketitle

\begin{abstract} Our main goal in this paper is to answer new positive cases of the natural generalized version of Hartshorne's celebrated question on cofiniteness of local cohomology modules, and consequently of Huneke's conjecture on the finiteness of their sets of associated primes. Our approach, by means of which we extend several results from the literature, is essentially based on spectral sequence techniques and connections to numerical invariants such as the cohomological dimension and the Gorenstein projective dimension. We also provide, over a polynomial ring, a rather pathological example of a non weakly Laskerian module (i.e., it admits a quotient with infinitely many associated primes) whose first local cohomology module is non-zero and has finitely many associated primes.

  
\end{abstract}


\section{Introduction}


In 1968, Grothendieck \cite{G} conjectured that, if $R$ is a (commutative, unital, Noetherian) ring, $I$ is an ideal of $R$, and $N$ is a finite (i.e., finitely generated) $R$-module, then the homomorphism module $\Hom_R (R/I, H^i_{I}(N))$ is  finite for all $i$. Here, $H^i_{I}(N)$ denotes the $i$-th local cohomology module of $N$ with respect to ${I}$. This conjecture is false and the first counterexample was given about two years later by Hartshorne \cite{H}, where in particular he defined an $R$-module $H$ to be $I$-\textit{cofinite} if $\Supp_RH \subset V(I)$ and the $R$-module $\Ext^i_R (R/I, H)$ is finite for all $i$. He then posed the following question which extends Grothendieck's conjecture.

\medskip

\noindent {\it Hartshorne's question}: If $N$ is finite, when is $H^i_{I}(N)$ $I$-cofinite for all $i\geq 0$?

\medskip


It is well-known and easy to see that $I$-cofinite modules have finitely many associated primes. This fact establishes a direct connection between Hartshorne's question and the charming conjecture regarding the finiteness of $\Ass_RH^i_{I}(N)$ raised by Huneke \cite{Hunekeconjecture} at Sundance Conference in 1990. 


\medskip

\noindent {\it Huneke's conjecture}: If $R$ is local and $N$ is finite, then for all $i \geq 0$ the module $H^i_{I}(N)$ has finitely many associated primes.

\medskip


There exist counterexamples to this conjecture (see Katzman \cite{Ka}, also Singh and Swanson \cite{SS}). On the other hand, for example,
Brodmann and Lashgari \cite{BL} showed that if $N$ is a finite $R$-module
such that $H^i_{I}(N)$ is finite for all $i<t$ then $\Ass_RH^t_{I}(N)$
is finite. It is also worth recalling that, if $R$ is not required to be local, these sets of associated primes may be infinite (see Singh \cite{Si, Si2}), whereas they are always finite if $R$ is a smooth ${\mathbb Z}$-algebra (see  Bhatt, Blickle, Lyubeznik, Singh, and Zhang \cite{BBLSZ}).

Such problems have attracted the attention of numerous researchers and have been answered
positively in several cases, typically in small dimensions; see  Bahmanpour and Naghipour \cite{BAH1}, Huneke and Koh \cite{Huneke}, Marley \cite{Marley}, and Melkersson \cite{Mel}. For excellent surveys on finiteness problems in local cohomology, we refer to Hochster 
\cite{Hochster} and Lyubeznik \cite{Lyu} as well as many of their references such as Huneke and Lyubeznik \cite{Huneke-L}, Huneke and Sharp \cite{Huneke-S}, and Lyubeznik \cite{Lyu0}.

Now let us recall that, making use of the weakly Laskerian property (which is satisfied, e.g., by all finite modules), Divaani-Aazar and Mafi \cite{DAM} introduced the class of the so-called $I$-weakly cofinite modules, which (strictly) extends the class of $I$-cofinite modules and inherits the property of having finitely many associated primes. Then, Hartshorne's question has been considered for $I$-weakly cofinite modules as well. In addition, the classical local cohomology module $H^i_{I}(N)$ was generalized by Herzog \cite{Herzog} to a suitable module $H^i_I(M,N)$, where $M$ is an $R$-module, in such a way that by letting $M=R$ we recover $H^i_{I}(N)$; for compelling reasons and several results that in our view justify the interest in {\it generalized} local cohomology (including a characterization of Cohen-Macaulay modules, the development of a general theory of deficiency and canonical modules, and criteria for prescribed bound on projective dimension), we refer to the recent paper \cite{FJMS}; in addition, as will be explained later in this Introduction, several of our results lead us to facts that are new even in the ordinary theory of local cohomology. So, there is the following interesting question.

\medskip

\noindent {\it Generalized Hartshorne's question}: If $M$ is finite and $N$ is weakly Laskerian, when is $H^i_{I}(M, N)$ $I$-weakly cofinite for all $i\geq 0$? Note this question can also be raised concerning the stronger property of $I$-cofiniteness, whereas now $M$ and $N$ are both finite.

\medskip

Likewise, Huneke's conjecture can be extended to the following problem.

\medskip

\noindent {\it Generalized Huneke's question}: If $M$ is finite and $N$ is weakly Laskerian, when is the set $\Ass_RH^i_{I}(M, N)$ finite for all $i\geq 2$? 


\medskip

Here it is worth observing that the set $\Ass_RH^1_{I} (M,N)$ is known to be finite; this follows from Mafi \cite[Theorem 3.3]{AM}. As to the module $H^0_{I} (M,N)$, it is easily seen to be $I$-weakly cofinite (this will be checked as part of some proof later in this paper) and hence $\Ass_RH^0_{I} (M,N)$ must be finite as well. Naturaly, a weaker related question to Huneke's is concerned with the finiteness of $\Ass_RH^i_{I}(M, N)$ for {\it some} $i\geq 2$ such that $H^i_{I}(M, N)\neq 0$.




The present work aims essentially at tackling the above generalized questions and closely related problems, among other achievements in the theory. Our main results are Theorems \ref{theorem section 1}, \ref{generalizedweaklycofinite}, \ref{cd123}, \ref{gcnumbers}, \ref{gpdfinite}, and \ref{dim+gpd=4}. In particular, they strengthen or extend results by several other authors, and most of them yield consequences which are new even in the ordinary theory of local cohomology, such as Corollaries \ref{cd=1-bis}, \ref{dim4-0}, and \ref{corollary four dimensional}; see also Proposition \ref{topcohomology} and Theorem \ref{criteria dim < 2} -- the latter, whose proof was inspired by that of Cuong, Goto, and Hoang \cite[Corollary 5.3]{CGH}, leads us in turn to an application (see Corollary \ref{theorem abelian category}) regarding the category of $I$-weakly cofinite modules of dimension at most 2 which, in the local setting, extends the case of dimension at most 1 carried out by Bahmanpour, Naghipour, and Sedghi \cite[Theorem 3.5]{BAH}. In the last part of the paper (see Section \ref{patho}), we provide a couple of intriguing examples about sets of associated primes of local cohomology modules, one of them illustrating that a non weakly Laskerian module $N$ over a polynomial ring may have $H^1_I(N)\neq 0$ with finitely many associated primes.

Our main techniques are based on the extensive use of certain spectral sequences (see Lemma \ref{ss}) and on exploring connections to suitable numerical invariants such as the generalized cohomological dimension (see Definition \ref{gcdim}) and the relative Gorenstein projective dimension (see Definition \ref{def-gpd}) of a given pair of modules $M, N$, where, in most of our results, $M$ is assumed to be finite and $N$ weakly Laskerian.

Throughout this paper, all rings are assumed to be commutative and Noetherian with multiplicative identity 1, and, as mentioned above, by a {\it finite} module we mean a finitely generated module.

\section{The first generalized local cohomology module}

We begin by investigating finiteness properties related to the first generalized local cohomology module $H_I^1(M, N)$ of certain $R$-modules $M$ and $N$; see Definition \ref{Herdef} below. Throughout the paper, $I$ is a proper ideal of the ring $R$, and as usual we set $V(I)=\{P\in {\rm Spec}\,R \mid I\subset P\}$. First, let us invoke a few concepts which play a central role in this note.

\begin{definition}\label{defWL}\rm (\cite{DAM}) Let $W$ be an $R$-module. 

\medskip

\noindent (i) $W$ is said to be \textit{weakly Laskerian} if the set $\Ass_RW/U$ is finite for every $R$-submodule $U$ of $W$;

\medskip

\noindent (ii) $W$ is said to be {\it $I$-weakly cofinite} if $\Supp_RW \subset V(I)$ and $\Ext^i_R (R/I, W)$ is weakly Laskerian for all $i\geq 0$.
\end{definition}

Below we collect some relevant facts on these notions (proofs can be found in \cite{DAM}). Part (iv), for instance, will be used in several proofs without explicit mention.

\begin{remark}\label{obss}\rm  (i) Any Noetherian module is weakly
Laskerian. If $\Supp_RW$ is finite, then $W$ is weakly
Laskerian; in particular, any Artinian $R$-module is weakly Laskerian. If $M$ is finite and $N$ is weakly Laskerian, then the $R$-modules $\Ext_R^i(M, N)$ and $\Tor^R_i(M, N)$ are weakly Laskerian for all $i \geq 0$.

\medskip

\noindent (ii) Assume that $\Supp_RW \subset V(I)$. If $W$ is weakly Laskerian, then $W$ is $I$-weakly cofinite; in particular, if $W$ is either finite or Artinian, then $W$ is $I$-weakly cofinite.

\medskip

\noindent (iii) If $W$ is $I$-weakly cofinite, then $\Ass_RW$ is finite.

\medskip

\noindent (iv) If any two of the $R$-modules in an exact sequence $\xymatrix@=1em{0\ar[r] & U\ar[r] & W\ar[r] & C\ar[r] & 0}$ (resp. $\xymatrix@=1em{U\ar[r] & W\ar[r] & C}$) are $I$-weakly cofinite (resp. weakly Laskerian) then so is the remaining module.

\end{remark}

\begin{definition}\rm (\cite{Herzog}, \cite{B})\label{Herdef} Given $i\geq 0$, the $i$-th {\it generalized local cohomology module} of the $R$-modules $M$, $N$ with respect to the ideal $I$ of $R$ is defined as 
$$H^i_I(M,N) = \displaystyle\varinjlim_{n} \Ext^i_R (M/I^n M,N).$$ For each $i$, the direct system is induced by the epimorphsms $\xymatrix@=1em{M/I^{n+1}M\ar[r] & M/I^nM.}$ Also note that, by letting $M=R$, this definition retrieves the ordinary local cohomology module $H^i_I(N)$ of $N$.
\end{definition}

Now suppose $N$ is a finite $R$-module. Besides $H^0_{I} (N)$, the first local cohomology module $H^1_{I} (N)$ is the only one for which the finiteness of the set of associated primes can be guaranteed without further assumptions.
This fact is well-known and a simple proof was given in \cite[Proposition 1.1(c)]{Marley}. When $N$ is replaced with a weakly Laskerian $R$-module (which is therefore not necessarily finite), this result remains true as shown in \cite[Corollary 2.7]{DAM}.


As expected, we may consider the same issue for $H^0_{I} (M,N)$ and $H^1_{I} (M,N)$. If again $M$ is finite and $N$ is weakly Laskerian, then the module $H^0_{I} (M,N)$ is $I$-weakly cofinite and consequently it has finitely many associated primes. The finiteness of the set $\Ass_R H^1_{I} (M,N)$ persists to hold as well and this can be deduced from \cite[Theorem 3.3]{AM}, as highlighted in Proposition \ref{prop Ass H^1(M,N)} below. In this section, our main goal is to establish the finiteness of $\Ass_R \Ext^1_R (R/I, H^1_{I} (M,N))$ in certain situations, the natural motivation for this problem being the equality $$\Ass_R \Hom_R (R/I, H^1_{I} (M,N)) = \Ass_R H^1_{I} (M,N),$$ which follows easily from \cite[Exercise 1.2.27]{BH} and the fact that $\Ass_R H^1_{I} (M,N)\subset V(I)$.


We record the following fact, which can be seen by taking $t=1$ in \cite[Theorem 3.3]{AM}.

\begin{proposition}\label{prop Ass H^1(M,N)}
 Suppose $M$ is finite and $N$ is weakly Laskerian. Then, $\Ass_RH^1_{I} (M,N)$ is finite.
\end{proposition}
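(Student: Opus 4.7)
My plan is to use the Grothendieck-type change-of-rings spectral sequence relating generalized and ordinary local cohomology, which I expect to be (a special case of) what the authors call Lemma~\ref{ss}:
$$E_2^{p,q} = \Ext^p_R(M, H^q_I(N)) \;\Longrightarrow\; H^{p+q}_I(M,N).$$
From this I would extract the five-term exact sequence in low degrees
$$0 \longrightarrow \Ext^1_R(M, \Gamma_I(N)) \longrightarrow H^1_I(M,N) \longrightarrow \Hom_R(M, H^1_I(N)) \longrightarrow \Ext^2_R(M, \Gamma_I(N)),$$
where $\Gamma_I(N)=H^0_I(N)$. The standard fact that for an exact sequence $0\to A\to B\to C$ one has $\Ass_R B\subset \Ass_R A\cup\Ass_R C$ reduces the problem to the finiteness of $\Ass_R \Ext^1_R(M,\Gamma_I(N))$ and of $\Ass_R\Hom_R(M,H^1_I(N))$.

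For the first piece, I would note that $\Gamma_I(N)$ is a submodule of $N$; since $N$ is weakly Laskerian, so is $\Gamma_I(N)$ (by Remark~\ref{obss}, applied to the quotient definition). Because $M$ is finite, Remark~\ref{obss}(i) gives that $\Ext^1_R(M,\Gamma_I(N))$ is weakly Laskerian, and then Remark~\ref{obss}(iii) (or rather its immediate analogue for weakly Laskerian modules, namely that they have finitely many associated primes at the level of the module itself) yields that $\Ass_R \Ext^1_R(M,\Gamma_I(N))$ is finite.

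For the second piece, I would use the elementary identity
$$\Ass_R\Hom_R(M,H^1_I(N)) \;=\; \Supp_R M\,\cap\,\Ass_R H^1_I(N),$$
valid because $M$ is finite, together with the weakly Laskerian version of Marley's result recalled in the paragraph preceding the statement (\cite[Corollary~2.7]{DAM}), which guarantees that $\Ass_R H^1_I(N)$ is finite whenever $N$ is weakly Laskerian. Combining, both terms on the right of the inclusion of associated primes are finite, and hence so is $\Ass_R H^1_I(M,N)$.

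The main obstacle, and the only non-formal step, is asserting the existence and convergence of the composed-functor spectral sequence in the present generality, where $H^i_I(M,N)$ is defined as a direct limit rather than as a derived functor of a composition; once one commutes the direct limit past $\Hom_R(M,-)$ using finiteness of $M$ and past the derived functors of $\Gamma_I$, this reduces to the standard Grothendieck spectral sequence for the composite $\Gamma_I \circ \Hom_R(M,-)$, so I would invoke Lemma~\ref{ss} to handle this step cleanly rather than redoing it by hand.
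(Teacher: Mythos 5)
Your argument is correct, but it is not the route the paper takes: the paper offers no proof at all for this proposition, simply observing that it is the case $t=1$ of Mafi's Brodmann--Lashgari-type theorem \cite[Theorem 3.3]{AM} (whose hypothesis at $t=1$ is satisfied because $H^0_I(M,N)$ embeds into the weakly Laskerian module $\Hom_R(M,N)$). What you give instead is a short self-contained derivation from the five-term exact sequence of the spectral sequence in Lemma \ref{ss}(ii),
$$0 \to \Ext^1_R(M,\Gamma_I(N)) \to H^1_I(M,N) \to \Hom_R(M,H^1_I(N)) \to \Ext^2_R(M,\Gamma_I(N)),$$
and each step checks out: $\Gamma_I(N)$ is weakly Laskerian as a submodule of $N$ (the class is a Serre subcategory, \cite[Lemma 2.2]{DAM}), so $\Ext^1_R(M,\Gamma_I(N))$ is weakly Laskerian and in particular has finitely many associated primes; and $\Ass_R\Hom_R(M,H^1_I(N))=\Supp_RM\cap\Ass_RH^1_I(N)$ is finite by \cite[Exercise 1.2.27]{BH} together with \cite[Corollary 2.7]{DAM}. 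Your worry about the existence of the composed-functor spectral sequence is moot, since Lemma \ref{ss}(ii) (from \cite[Proposition 2.1]{FJMS}) asserts it exactly in this generality for $M$ finite. The trade-off is the usual one: the citation route is shorter and rests on a more general finiteness theorem, while your argument is elementary, stays entirely within the toolkit the paper already deploys in Section 2 (compare the proof of Proposition \ref{fivetermsexactsequence} and Remark \ref{depth>0remark}), and makes transparent exactly which two finiteness inputs the conclusion depends on.
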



Before moving to the main goal of this section, we recall some spectral sequence tools from \cite[Proposition 2.1]{FJMS} which will be used in most of our proofs.

\begin{lemma}\label{ss}
For $R$-modules $M$ and $N$, with $M$ finite, we have the following spectral sequences:
\begin{itemize}
    \item [\rm (i)] $E_2^{p,q}=H^p_I(\Ext^q_R(M,N))\Rightarrow_p H^{p+q}_I(M,N)$;
    \item [\rm (ii)] $E_2^{p,q}=\Ext^p_R(M,H^q_I(N))\Rightarrow_p H^{p+q}_I(M,N)$.
\end{itemize}
\end{lemma}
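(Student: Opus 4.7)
The plan is to obtain both spectral sequences as instances of the Grothendieck composite-functor spectral sequence. The starting point, valid because $M$ is finitely generated, is the pair of functorial identifications
$$H^0_I(M,N) \;=\; \Gamma_I(\Hom_R(M,N)) \;=\; \Hom_R(M,\Gamma_I(N)),$$
each of which one checks by verifying that both sides coincide with $\varinjlim_n \Hom_R(M/I^nM,N)$: an $R$-homomorphism $f\colon M\to N$ is killed by some power of $I$ if and only if it factors through some $M/I^nM$, and for $M$ generated by $m_1,\dots,m_r$ this is in turn equivalent to requiring every $f(m_i)$ to lie in $\Gamma_I(N)$. Moreover, since direct limits are exact in the category of $R$-modules, the right derived functors of $H^0_I(M,-)=\varinjlim_n\Hom_R(M/I^nM,-)$ are $\varinjlim_n\Ext^i_R(M/I^nM,-)=H^i_I(M,-)$, as in Definition \ref{Herdef}.

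For (ii), I would apply Grothendieck's theorem to the factorization $H^0_I(M,-) = \Hom_R(M,-)\circ\Gamma_I$. Because $R$ is Noetherian, $\Gamma_I$ preserves injectives, so $\Ext^p_R(M,\Gamma_I(E))=0$ for every injective $E$ and every $p>0$; this is the required acyclicity and yields
$$E_2^{p,q}=\Ext^p_R(M,H^q_I(N))\Rightarrow_p H^{p+q}_I(M,N).$$

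For (i), I would apply the same machine to the other factorization $H^0_I(M,-)=\Gamma_I\circ\Hom_R(M,-)$. The key lemma needed here is that $\Hom_R(M,E)$ is $\Gamma_I$-acyclic for every injective $E$. Writing $E=\bigoplus_\alpha E(R/P_\alpha)$ and invoking that, for finite $M$, both $\Hom_R(M,-)$ and the functors $H^p_I$ commute with this direct sum, one reduces to $E=E(R/P)$. If $I\not\subseteq P$, pick $x\in I\setminus P$: then $x$ acts invertibly on $E(R/P)$, hence on $X:=\Hom_R(M,E(R/P))$, while $x^n$ annihilates $R/I^n$ and therefore $\Ext^p_R(R/I^n,X)$; simultaneous invertibility of $x$ and vanishing of $x^n$ on $\Ext^p_R(R/I^n,X)$ force this module to vanish for all $n,p$, so $H^p_I(X)=0$. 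If $I\subseteq P$, each generator $m_i$ is sent by any $f\colon M\to E(R/P)$ to an element killed by a power of $P\supseteq I$, whence $f$ itself is killed by a power of $I$; thus $X$ is $I$-torsion. Any $I$-torsion module $X$ is $\Gamma_I$-acyclic: for every $x\in I$ we have $X_x=0$, so the \v{C}ech complex on generators of $I$ collapses to $X$ concentrated in degree zero. With this acyclicity in hand, Grothendieck's theorem produces
$$E_2^{p,q}=H^p_I(\Ext^q_R(M,N))\Rightarrow_p H^{p+q}_I(M,N).$$

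The genuine obstacle is the last acyclicity check, which fundamentally uses the finiteness of $M$ in both subcases (in the first, to allow the reduction to a single $E(R/P)$; in the second, to guarantee that $f\colon M\to E(R/P)$ is itself $I$-power-torsion); the remaining ingredients are either routine identifications or a black-box application of the Grothendieck composite-functor theorem.
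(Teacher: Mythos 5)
Your argument is correct and complete. Note that the paper does not actually prove Lemma \ref{ss}: it simply imports both spectral sequences from \cite[Proposition 2.1]{FJMS}, so there is no internal proof to compare against; your derivation via the Grothendieck composite-functor theorem applied to the two factorizations $\Gamma_I\circ\Hom_R(M,-)$ and $\Hom_R(M,-)\circ\Gamma_I$ of $\varinjlim_n\Hom_R(M/I^nM,-)$ is the standard route (essentially Herzog's original one) and you correctly identify that the entire content lies in the two acyclicity checks. Both are handled properly: for (ii) you use that $\Gamma_I$ preserves injectives over a Noetherian ring, and for (i) your reduction to $E(R/P)$ is legitimate because $\Hom_R(M,-)$ with $M$ finite and $H^p_I$ both commute with direct sums, after which the dichotomy $I\not\subseteq P$ (where $x\in I\setminus P$ acts both invertibly and nilpotently on $\Ext^p_R(R/I^n,X)$) versus $I\subseteq P$ (where Matlis's theorem that every element of $E(R/P)$ is killed by a power of $P$ makes $X$ an $I$-torsion, hence $\Gamma_I$-acyclic, module) settles everything. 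The only inputs you leave as black boxes --- the structure theory of injectives over Noetherian rings, the $P$-torsion property of $E(R/P)$, and the fact that the \v{C}ech complex computes $H^\bullet_I$ of arbitrary modules --- are all standard and correctly invoked, so the proof stands as a self-contained replacement for the citation.
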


The next proposition extends \cite[Lemma 2.7]{MS}.

\begin{proposition}\label{fivetermsexactsequence}
Let $M,N$ be $R$-modules, with $M$ finite and $N$ weakly Laskerian. The following assertions are equivalent:

\begin{itemize}

\item [\rm(i)] $H^1_I(M,N)$ is $I$-weakly cofinite;

\item [\rm(ii)] $H^1_I(\Hom_R(M,N))$ is $I$-weakly cofinite;
    
\item [\rm(iii)] $\Hom_R(M,H^1_I(N))$ is $I$-weakly cofinite. 

\end{itemize}

\end{proposition}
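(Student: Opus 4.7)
The plan is to apply, separately, each of the two spectral sequences in Lemma \ref{ss} and read off the standard five-term exact sequence of low-degree terms. Each such sequence will insert $H^1_I(M,N)$ between modules naturally built from $\Hom_R(M,N)$ (via Lemma \ref{ss}(i)) or from the $H^q_I(N)$ (via Lemma \ref{ss}(ii)), while the remaining ``corner'' terms turn out to be automatically $I$-weakly cofinite under our hypotheses; combined with Remark \ref{obss}(iv), this will force the stated equivalences.

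For (i)$\Leftrightarrow$(ii), Lemma \ref{ss}(i) yields
$$0 \to H^1_I(\Hom_R(M,N)) \to H^1_I(M,N) \to H^0_I(\Ext^1_R(M,N)) \to H^2_I(\Hom_R(M,N)) \to H^2_I(M,N).$$
Since $M$ is finite and $N$ is weakly Laskerian, $\Ext^1_R(M,N)$ is weakly Laskerian by Remark \ref{obss}(i); its $I$-torsion submodule $H^0_I(\Ext^1_R(M,N))$ is then weakly Laskerian with support in $V(I)$, hence $I$-weakly cofinite by Remark \ref{obss}(ii), and so is the kernel $K$ of the map to $H^2_I(\Hom_R(M,N))$. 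Truncating the sequence to
$$0 \to H^1_I(\Hom_R(M,N)) \to H^1_I(M,N) \to K \to 0$$
and applying Remark \ref{obss}(iv) delivers (i)$\Leftrightarrow$(ii).

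For (i)$\Leftrightarrow$(iii), Lemma \ref{ss}(ii) produces the analogous
$$0 \to \Ext^1_R(M, H^0_I(N)) \to H^1_I(M,N) \to \Hom_R(M, H^1_I(N)) \to \Ext^2_R(M, H^0_I(N)) \to H^2_I(M,N).$$
Here $H^0_I(N)$, an $I$-torsion submodule of the weakly Laskerian module $N$, is weakly Laskerian, so by Remark \ref{obss}(i) each $\Ext^i_R(M, H^0_I(N))$ is weakly Laskerian; since these modules have support in $V(I)$, they are $I$-weakly cofinite by Remark \ref{obss}(ii). Splitting the five-term sequence into two short exact sequences through the image of $H^1_I(M,N)$ inside $\Hom_R(M, H^1_I(N))$ and applying Remark \ref{obss}(iv) twice yields (i)$\Leftrightarrow$(iii).

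The only real obstacle is essentially bookkeeping: verifying that every corner term emerging from the spectral sequences is indeed $I$-weakly cofinite. This reduces to the facts that submodules of weakly Laskerian modules are weakly Laskerian (immediate from Definition \ref{defWL}(i)), that $\Ext^\ast_R(M,-)$ preserves weak Laskerianity when $M$ is finite (Remark \ref{obss}(i)), and that every $I$-torsion weakly Laskerian module is $I$-weakly cofinite (Remark \ref{obss}(ii)); these observations are precisely what make the hypothesis ``$N$ weakly Laskerian'' the right one for the argument to go through.
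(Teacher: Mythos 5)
Your proof is correct and follows essentially the same route as the paper: both arguments extract the five-term exact sequences of low-degree terms from the two spectral sequences of Lemma \ref{ss}, observe that the corner terms $H^0_I(\Ext^1_R(M,N))$ and $\Ext^i_R(M,H^0_I(N))$ are weakly Laskerian with support in $V(I)$ (hence $I$-weakly cofinite), and conclude via the two-out-of-three property of Remark \ref{obss}(iv).
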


\begin{proof}
Any of the spectral sequences $E$ in Lemma \ref{ss} is first quadrant and has a corner
$$\xymatrix@=1em{
E_2^{0,1}\ar[rrd]^{d_2} & E^{1,1}_2 & E^{2,1}_2 & \cdots
\\
E^{0,0}_2 & E^{1,0}_2 & E^{2,0} & \cdots
}$$
Note there are exact sequences
$$\xymatrix@=1em{0\ar[r] & E_2^{1,0}\ar[r] & H^1_I(M,N)\ar[r] & E_\infty^{0,1}\ar[r] & 0},$$
$$\xymatrix@=1em{0\ar[r] & E_\infty^{0,1}\ar[r] & E_2^{0,1}\ar[r]^{d_2} & E_2^{2,0}\ar[r] & E_\infty^{2,0}  \ar[r] & 0}.$$
Since $E_\infty^{2,0}\subset H^2_I(M,N)$, we may combine the above exact sequences into a five-term exact sequence $$\xymatrix@=1em{
& 0\ar[d]
\\
& E_2^{1,0}\ar[d]
\\
& H^1_I(M,N)\ar[d]\ar@{-->}[rd] & & & 0\ar[d]
\\
0\ar[r] & E_\infty^{0,1}\ar[r]\ar[d] & E_2^{0,1}\ar[r]^{d_2} & E_2^{2,0}\ar[r]\ar@{-->}[rd] & E_\infty^{2,0}\ar[r]\ar[d] & 0
\\
& 0 & & & H^2_I(M,N)
}$$
Now, for the equivalence (i)$\Leftrightarrow$(ii), we consider this sequence in light of the spectral sequence of Lemma \ref{ss}(i). We thus obtain a short exact sequence
$$\xymatrix@=1em{0\ar[r] & H^1_I(\Hom_R(M,N))\ar[r] & H^1_I(M,N)\ar[r] & C\ar[r] & 0},$$
where $C\subset H^0_I(\Ext^1_R(M,N))$.
Hence, $C$ is $I$-weakly cofinite, which enables us to deduce the equivalence between the $I$-weak cofiniteness of $H^1_{I} (M,N)$ and $H^1_{I} (\Hom_R (M,N))$.

Next, for the equivalence (i)$\Leftrightarrow$(iii), let us consider the spectral sequence given in Lemma \ref{ss}(ii) and its five-term exact sequence. From the exact sequence
$$\xymatrix@=1em{0\ar[r] & \Ext^1_R(M,H^0_I(N))\ar[r] & H^1_I(M,N)\ar[r] & \Hom_R(M,H^1_I(N))\ar[r] & C'\ar[r]& 0,}$$
and since $C'\subset\Ext^2_R(M,H^0_I(N))$ is $I$-weakly cofinite, we conclude that $H^1_I(M,N)$ is $I$-weakly cofinite if and only if  $\Hom_R(M,H^1_I(N))$ has the same property.
\end{proof}

\begin{definition}\rm The {\it $I$-depth} of an $R$-module $M$ is defined as $$\depth_{I} M = \inf \{i\geq 0 \mid H^i_{I} (M) \neq 0\}.$$ Clearly, if $M$ is finite and $M \neq IM$, then this definition agrees with the classical one, namely, the length of some (any) maximal $M$-sequence contained in $I$. If in addition $R$ is local and $I$ is its maximal ideal, then we write $\depth_{R} M$.
\end{definition}

\begin{remark}\label{depth>0remark}\rm
By the five-term exact sequence corresponding to the spectral sequence of Lemma \ref{ss}(ii) (see the proof of Proposition \ref{fivetermsexactsequence}) along with the condition $\depth_IN>0$ in place of the weak Laskerianess of $N$, we obtain in fact an isomorphism $$H^1_I(M,N)\cong\Hom_R(M,H^1_I(N)).$$ This extends \cite[Lemma 7.9]{Mel}.
\end{remark}

\begin{proposition}\label{proposition H^1}
Let $M, N$ be $R$-modules, with $M$ finite and $\depth_IN>0$. Assume that $\Tor_i^R(R/I,M)=0$ for all $i>0$ $($see Remark $\ref{tor0}$ below$)$. Then
$$\Ext^1_R(R/I,H^1_I(M,N))\subset\Ext^1_R(M/IM,H^1_I(N)).$$
\end{proposition}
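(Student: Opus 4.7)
The plan is to first use Remark \ref{depth>0remark} to replace $H^1_I(M,N)$ by the isomorphic module $\Hom_R(M, H^1_I(N))$, which is legitimate because $\depth_I N > 0$. Setting $L = H^1_I(N)$, the statement to prove reduces to the inclusion
$$\Ext^1_R(R/I, \Hom_R(M, L)) \subset \Ext^1_R(M/IM, L).$$
This has the shape of a five-term exact sequence edge map, so the natural approach is to produce a Grothendieck-type spectral sequence for the composition of functors
$$\Hom_R(R/I, -) \circ \Hom_R(M, -) \;=\; \Hom_R(M/IM, -),$$
where the last equality is the standard tensor-hom adjunction identifying $R/I \otimes_R M$ with $M/IM$.

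Next, I would set up this spectral sequence by verifying its only nontrivial prerequisite: that $\Hom_R(M, -)$ sends injective $R$-modules to $\Hom_R(R/I, -)$-acyclics. This is precisely where the hypothesis $\Tor_i^R(R/I, M) = 0$ for $i > 0$ is used. Concretely, if $E$ is an injective $R$-module and $P_\bullet \to R/I$ is a projective resolution, then via adjunction
$$\Hom_R(P_\bullet, \Hom_R(M, E)) \;\cong\; \Hom_R(P_\bullet \otimes_R M, E),$$
and since $E$ is injective the cohomology of the right-hand side equals $\Hom_R(\Tor^R_*(R/I, M), E)$, which vanishes in positive degrees by hypothesis. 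This yields the convergent first-quadrant spectral sequence
$$E_2^{p,q} \;=\; \Ext^p_R(R/I, \Ext^q_R(M, L)) \;\Longrightarrow\; \Ext^{p+q}_R(M/IM, L).$$

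Finally, I would extract the associated five-term exact sequence
$$0 \to \Ext^1_R(R/I, \Hom_R(M, L)) \to \Ext^1_R(M/IM, L) \to \Hom_R(R/I, \Ext^1_R(M, L)) \to \Ext^2_R(R/I, \Hom_R(M, L)),$$
whose leftmost map provides exactly the desired injection once one substitutes back $\Hom_R(M, L) = \Hom_R(M, H^1_I(N)) \cong H^1_I(M,N)$ from Remark \ref{depth>0remark}. The main (and essentially only) obstacle is the acyclicity verification that legitimizes the Grothendieck spectral sequence; once that is in place, the remaining step is a standard read-off from the five-term sequence. A quicker alternative, if one prefers to avoid invoking the Grothendieck machinery by name, is to build the same spectral sequence directly from the double complex $\Hom_R(P_\bullet, \Hom_R(Q_\bullet, L))$, where $Q_\bullet \to M$ is a projective resolution: the Tor-vanishing hypothesis ensures that $\mathrm{Tot}(P_\bullet \otimes_R Q_\bullet)$ is a projective resolution of $M/IM$, so filtering by the $p$-degree recovers both abutment and $E_2$-page of the spectral sequence above.
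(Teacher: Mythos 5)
Your proposal is correct and follows essentially the same route as the paper: the authors also form the double complex $\Hom_R(P_\bullet,\Hom_R(M,E^\bullet))\cong\Hom_R(P_\bullet\otimes_RM,E^\bullet)$ (with $E^\bullet$ an injective resolution of $H^1_I(N)$), use the Tor-vanishing to identify the abutment with $\Ext^{i+j}_R(M/IM,H^1_I(N))$, and read the inclusion off the corner of the resulting spectral sequence, with Remark \ref{depth>0remark} supplying the identification $H^1_I(M,N)\cong\Hom_R(M,H^1_I(N))$. Your write-up merely spells out the acyclicity verification that the paper leaves implicit.
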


\begin{proof}
By taking a projective resolution $P_\bullet$ of $R/I$ and an injective resolution $E^\bullet$ of $H^1_I(N)$, and by considering tensor-hom adjunction, we obtain an isomorphism of double complexes
$$\Hom_R(P_\bullet,\Hom_R(M,E^\bullet))\cong\Hom_R(P_\bullet\otimes_RM,E^\bullet)$$ from which we derive a spectral sequence
$$\Ext^i_R(R/I,\Ext^j_R(M,H^1_I(N)))\Rightarrow_i\Ext^{i+j}_R(M/IM,H^1_I(N)).$$ The result now follows by analyzing the corner of such a spectral sequence.
\end{proof}

Finally we come to the main result of this section, and its immediate consequence in the ordinary local cohomology case (which is probably known to experts).

\begin{theorem}\label{theorem section 1}
 Suppose $M$ is finite and $N$ is weakly Laskerian. If $\Tor^R_i (R/I,M)=0$ for all $i>0$ and $\depth_{I} N>0$, then
$\Ext^1_R (R/I, H^1_{I}(M,N))$ is weakly Laskerian. In particular, $\Ass_R\Ext^1_R (R/I, H^1_{I}(M,N))$ is finite.
\end{theorem}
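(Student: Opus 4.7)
The strategy is to chain together two observations. First, Proposition \ref{proposition H^1} already supplies an inclusion
$$\Ext^1_R(R/I, H^1_I(M, N)) \hookrightarrow \Ext^1_R(M/IM, H^1_I(N)),$$
since both of its hypotheses ($\depth_I N > 0$ and $\Tor^R_i(R/I, M) = 0$ for $i > 0$) are in force. Because the class of weakly Laskerian modules is closed under submodules, it then suffices to prove that $\Ext^1_R(M/IM, H^1_I(N))$ is weakly Laskerian.

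The key trick for this is to apply Lemma \ref{ss}(ii) with $M$ replaced by the finite $R$-module $M/IM$. Since $I$ annihilates $M/IM$, all of the iterated quotients $(M/IM)/I^n(M/IM)$ coincide with $M/IM$, so the definition of generalized local cohomology collapses to $H^n_I(M/IM, N) = \Ext^n_R(M/IM, N)$. Lemma \ref{ss}(ii) therefore yields a first-quadrant spectral sequence
$$E_2^{p,q} = \Ext^p_R(M/IM, H^q_I(N)) \Rightarrow \Ext^{p+q}_R(M/IM, N).$$
The hypothesis $\depth_I N > 0$ forces $H^0_I(N) = 0$, so the entire row $q = 0$ vanishes. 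A dimensional check shows that all differentials in or out of the $(1,1)$ slot then vanish at every page, so $E_\infty^{1,1} = E_2^{1,1} = \Ext^1_R(M/IM, H^1_I(N))$. Moreover, $E_\infty^{2,0} = 0$, so the $(1,1)$-piece sits at the bottom of the filtration of $\Ext^2_R(M/IM, N)$ in total degree $2$, producing an embedding
$$\Ext^1_R(M/IM, H^1_I(N)) \hookrightarrow \Ext^2_R(M/IM, N).$$

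To conclude, since $M/IM$ is finite and $N$ is weakly Laskerian, Remark \ref{obss}(i) gives that $\Ext^2_R(M/IM, N)$ is weakly Laskerian; hence so are its submodules, and the first assertion follows. The ``in particular'' statement is then immediate from the definition of weak Laskerianness applied to the zero submodule. I do not anticipate a significant obstacle: once one sees the device of substituting $M/IM$ for $M$ in Lemma \ref{ss}(ii), so that the abutment degenerates to ordinary Ext against a weakly Laskerian module, the corner analysis of the spectral sequence does essentially all of the remaining work.
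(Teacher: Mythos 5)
Your argument is correct, and it shares its skeleton with the paper's proof: both reduce the problem, via Proposition \ref{proposition H^1}, to showing that $\Ext^1_R(M/IM, H^1_I(N))$ is weakly Laskerian. Where you genuinely differ is in how that reduction target is handled. The paper first establishes the weak Laskerianness of $\Ext^i_R(R/I, H^1_I(N))$ for $i=0,1$ by citing \cite[Lemmas 2.1 and 2.2]{BAH} (with $s=1$ and $\mathcal{S}$ the class of weakly Laskerian modules) and then transfers from $R/I$ to $M/IM$ via \cite[Lemma 2.2]{RM}. You instead apply Lemma \ref{ss}(ii) to the pair $M/IM, N$ --- correctly observing that the abutment degenerates to $\Ext^{p+q}_R(M/IM,N)$ because $I$ annihilates $M/IM$ --- and use $\depth_I N>0$, i.e.\ $H^0_I(N)=0$, to kill the bottom row, so that $E_2^{1,1}=E_\infty^{1,1}$ sits as a submodule of $\Ext^2_R(M/IM,N)$, which is weakly Laskerian by Remark \ref{obss}(i). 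Your corner analysis checks out: every differential into or out of the $(1,1)$ position either involves the vanishing row $q=0$ or leaves the first quadrant, and $E_\infty^{2,0}=0$ places $E_\infty^{1,1}$ at the bottom of the filtration of the total degree $2$ term. The payoff of your route is self-containedness --- it avoids the two external citations and runs entirely on machinery already set up in the paper --- while the paper's route additionally delivers the $i=0$ case ($\Hom_R(M/IM,H^1_I(N))$ weakly Laskerian), which is not needed for the stated conclusion.
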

\begin{proof}
First of all, by taking $s=1$ and $\mathcal{S}$ as the category of all weakly Laskerian $R$-modules in \cite[Lemma 2.1 and Lemma 2.2]{BAH}, we get the weak Laskerianess of $$\Ext^i_R (R/I,H^1_I(N)) \quad \mbox{for} \quad i=0, 1.$$ 
Therefore, by virtue of \cite[Lemma 2.2]{RM}, the $R$-module $\Ext^i_R (M/IM,H^1_I(N))$ is weakly Laskerian for $i=0,1$. Now, the assertion follows from Proposition \ref{proposition H^1}.
\end{proof}

\begin{corollary}\label{cor theorem section 1} If $N$ is weakly Laskerian and $\depth_{I} N>0$, then
$\Ext^1_R (R/I, H^1_{I}(N))$ is weakly Laskerian. In particular, $\Ass_R\Ext^1_R (R/I, H^1_{I}(N))$ is finite.
\end{corollary}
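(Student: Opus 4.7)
The plan is to deduce this corollary as an immediate specialization of Theorem \ref{theorem section 1} by setting $M=R$. Concretely, I would verify the three hypotheses of that theorem in this particular case: $M=R$ is trivially finite (cyclic, generated by $1$); we have $\Tor_i^R(R/I,R)=0$ for all $i>0$ because $R$ is flat over itself; and the weak Laskerianess of $N$ together with $\depth_I N>0$ is exactly what we are given.

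Once the hypotheses are in place, I would invoke the fact that by the very definition of generalized local cohomology (Definition \ref{Herdef}), one has the identification $H^i_I(R,N)=H^i_I(N)$ for every $i\geq 0$, and in particular $H^1_I(R,N)=H^1_I(N)$. Applying Theorem \ref{theorem section 1} to the pair $(R,N)$ therefore yields that $\Ext^1_R(R/I,H^1_I(N))$ is weakly Laskerian. The finiteness of $\Ass_R\Ext^1_R(R/I,H^1_I(N))$ then follows from Remark \ref{obss}(i), since the associated primes of a weakly Laskerian module (applied to the zero submodule) form a finite set.

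I do not expect any real obstacle here; the entire content is already packed into Theorem \ref{theorem section 1}, and the corollary amounts to recording that its hypotheses degenerate nicely when $M=R$. The only point worth being mindful of is to make the identification $H^1_I(R,N)\cong H^1_I(N)$ explicit, so that the conclusion is phrased in the ordinary (not generalized) local cohomology language as stated.
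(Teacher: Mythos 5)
Your proposal is correct and is exactly the paper's intended argument: the corollary is stated as the immediate specialization of Theorem \ref{theorem section 1} to $M=R$, using that $R$ is flat (so the Tor condition is vacuous) and that $H^1_I(R,N)=H^1_I(N)$ by Definition \ref{Herdef}. The only cosmetic slip is that the finiteness of the set of associated primes of a weakly Laskerian module follows directly from Definition \ref{defWL}(i) with $U=0$ rather than from Remark \ref{obss}(i), but your stated reasoning is the right one.
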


It seems natural to pose the following problem.

\begin{question}\rm Under the hypotheses of Theorem \ref{theorem section 1}, is it true that $H^1_{I}(M,N)$ is $I$-weakly cofinite? 


\end{question}

\begin{remark}\label{tor0}\rm Let us recall, for completeness, two instances where $\Tor_i^R(R/I,M)$ is known to vanish for all $i>0$, with $M$ finite, apart from the obvious case where $M$ is flat (e.g., $M=R$). First, if $I$ is generated by an $M$-sequence (this is easy and well-known); and second, if $R$ is local, $M$ is maximal Cohen-Macaulay (i.e., ${\rm depth}_RM={\rm dim}\,R$) and $I$ has finite projective dimension; see \cite[Lemma 2.2]{Y}. 

\end{remark}

\section{Weak cofiniteness of local cohomology modules}

The $I$-cofiniteness and $I$-weak cofiniteness of ordinary and generalized local cohomology modules have been extensively studied in the case of rings of small dimension. This section mainly aims to investigate the $I$-weak cofiniteness of $H^i_{I} (M,N)$ for small values of other numerical invariants attached to the given data, to wit, the generalized cohomological dimension and the relative Gorenstein projective dimension of the pair $M, N$. One of our goals is to furnish a version of \cite[Theorem 3.1]{DAM} (hence of \cite[Proposition 3.11]{Mel} as well) for generalized local cohomology modules.


First, before dividing this part into subsections, we provide a warm-up result.

\begin{proposition} Let $M, N$ be $R$-modules, with $M$  finite. Assume ${\rm Supp}_RM\cap {\rm Supp}_RN\subset V(I)$. If $N$ is weakly Laskerian $($resp. finite$)$, then  $H^i_I(M,N)$ is $I$-weakly cofinite $($resp. finite$)$ for all $i\geq 0$.
Consequently, ${\rm Ass}_RH^i_I(M,N)$ is finite for all $i\geq 0$.
\end{proposition}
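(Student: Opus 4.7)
The plan is to apply the spectral sequence from Lemma \ref{ss}(i) after observing that the support hypothesis forces the $\mathrm{Ext}$ modules appearing on the $E_2$-page to be $I$-torsion. Concretely, since $M$ is finite, one has
\[
\Supp_R \Ext^q_R(M,N) \;\subset\; \Supp_R M \cap \Supp_R N \;\subset\; V(I)
\]
for every $q\geq 0$, where the first inclusion is the standard support estimate for $\mathrm{Ext}$ with finite first argument (obtained by localizing at a prime and noting that $\Ext^q_R(M,N)_P \cong \Ext^q_{R_P}(M_P, N_P)$ vanishes whenever $M_P = 0$ or $N_P = 0$).

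Next, I would use the general fact that, over a Noetherian ring, any $R$-module $W$ with $\Supp_R W \subset V(I)$ is $I$-torsion (given $w \in W$, one has $V(\ann_R w) \subset V(I)$, so $I \subset \sqrt{\ann_R w}$, and since $I$ is finitely generated some $I^n$ kills $w$). Applied to $W = \Ext^q_R(M,N)$, this yields
\[
H^0_I(\Ext^q_R(M,N)) = \Ext^q_R(M,N) \quad\text{and}\quad H^p_I(\Ext^q_R(M,N)) = 0 \text{ for all } p>0.
\]
Hence the spectral sequence
\[
E_2^{p,q} = H^p_I(\Ext^q_R(M,N)) \;\Rightarrow_p\; H^{p+q}_I(M,N)
\]
collapses onto the column $p = 0$ and degenerates at $E_2$, producing an isomorphism
\[
H^i_I(M,N) \;\cong\; \Ext^i_R(M,N) \quad \text{for every } i\geq 0.
\]

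With this isomorphism in hand, the conclusion is immediate from the facts recalled in Remark \ref{obss}. Indeed, $M$ being finite and $N$ being weakly Laskerian (resp.\ finite) forces $\Ext^i_R(M,N)$ to be weakly Laskerian (resp.\ finite) by Remark \ref{obss}(i); combined with the containment $\Supp_R H^i_I(M,N) \subset V(I)$, this makes $H^i_I(M,N)$ an $I$-weakly cofinite (resp.\ $I$-cofinite) module by Remark \ref{obss}(ii). The finiteness of $\Ass_R H^i_I(M,N)$ then follows from Remark \ref{obss}(iii). I do not anticipate a genuine obstacle here; the only point requiring a line of justification is the passage from support in $V(I)$ to $I$-torsion that kills the higher $H^p_I$, after which the spectral sequence does all the work.
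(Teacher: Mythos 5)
Your proof is correct and follows essentially the same route as the paper: both arguments reduce everything to the isomorphism $H^i_I(M,N)\cong\Ext^i_R(M,N)$ and then invoke Remark \ref{obss}. The only difference is that the paper obtains this isomorphism by citing \cite[Corollary 1.8]{FJMS}, whereas you derive it directly from the degeneration of the spectral sequence of Lemma \ref{ss}(i) after observing that the $\Ext$ modules are $I$-torsion — a valid, self-contained substitute for that citation.
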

\begin{proof} Applying \cite[Corollary 1.8]{FJMS}, we get isomorphisms
$$H^i_I(M,N)\cong {\rm Ext}_R^i(M, N) \quad \mbox{for \,all} \quad i\geq 0.$$ Note the case where $N$ is also finite is now obvious. Finally,
if $N$ is weakly Laskerian then by Remark \ref{obss}(i) the modules ${\rm Ext}_R^i(M, N)$ are  weakly Laskerian as well, and hence so is each $H^i_I(M,N)$. Because ${\rm Supp}_RH^i_I(M,N)\subset V(I)$, we obtain by Remark \ref{obss}(ii) that $H^i_I(M,N)$ is $I$-weakly cofinite for all $i\geq 0$, as needed. In particular, $H^i_I(M,N)$ has finitely many associated primes (see Remark \ref{obss}(iii)). \end{proof}


\subsection{Cohomologically injective pairs} We begin by establishing a weak cofiniteness criterion (Theorem \ref{generalizedweaklycofinite} below) which generalizes \cite[Theorem 3.1]{DAM} and is particularly useful when the module $H^i_{I} (M,N)$ vanishes for many indices. To this end, we introduce a technical definition which is tailored to make possible extending some results from the ordinary to the generalized local cohomology setting.

\begin{definition}\label{cohomolinject}\rm Let $M, N$ be $R$-modules, with $M$  finite. We say that the pair $M, N$ is {\it $I$-cohomologically injective} if the 0-th ordinary local cohomology module $H^0_I({\rm Hom}_R(M,E^i))$ is injective for each $E^i$ appearing in some injective resolution $E^\bullet$ of the $R$-module $N$.
\end{definition}

\begin{remark}\label{ex-pairs}\rm Some words about Definition \ref{cohomolinject} are in order. 

\smallskip

\noindent (i) We do not know whether the above definition is independent of the choice of $E^\bullet$ (we believe it is). However, we claim that if $M, N$ is $I$-cohomologically injective, then the injective resolution $E^\bullet$ of $N$ can be taken minimal. Indeed, if $\widetilde{E}^\bullet$ is a minimal injective resolution of $N$, then there is an exact complex of $R$-modules $C^{\bullet}$ such that $E^\bullet = \widetilde{E}^\bullet \oplus C^{\bullet}$. Now, the claim follows by the isomorphism $$H^0_I({\rm Hom}_R(M,E^i)) \cong H^0_I({\rm Hom}_R(M,\widetilde{E}^i)) \oplus H^0_I({\rm Hom}_R(M,C^i)),$$ which ensures that $H^0_I({\rm Hom}_R(M,\widetilde{E}^i))$ must be injective if so is $H^0_I({\rm Hom}_R(M, E^i))$.



\medskip

\noindent (ii) Finite flat $R$-modules $F$ are such that the module $H^0_I({\rm Hom}_R(F, E)) \cong H^0_I(F, E)$ is injective whenever $E$ is injective -- in particular, $F, N$ is $I$-cohomologically injective for all $R$-modules $N$. Indeed, note first $H^0_I(F, E)\cong\Hom_R(F, H^0_I(E))$ (this is well-known but follows alternatively from the spectral sequence given in Lemma \ref{ss}(ii)). Thus, the functor $$\Hom_R(-, H^0_I({\rm Hom}_R(F, E)))\cong\Hom_R(- \otimes_RF,H^0_I(E))$$ is exact as $H^0_I(E)$ is injective. Reciprocally, by the same argument, if $M$ is a finite $R$-module such that $H^0_I({\rm Hom}_R(M,E))$ is injective and $\Hom_R(-, H^0_I(E))$ is a faithful functor for some injective $R$-module $E$, then $M$ must be flat. In particular, in case $(R, {\bf m})$ is local, the module $H^0_{\bf m}(M,E_R(R/{\bf m}))$ is injective if and only if $M$ is free, where $E_R(R/{\bf m})$ stands for the injective hull of the residue field $R/{\bf m}$. 


\end{remark}

Recall that a non-zero finite $R$-module $N$ is a \textit{Gorenstein module} if its Cousin complex $C^{\bullet}(N)$ is a minimal injective resolution of $N$. Details can be found in \cite{Sharp}, \cite{Sharp2}.

\begin{proposition}
Let $R$ be a Gorenstein local ring and $M$ a finite $R$-module. Then, the pair $M, R$ is $I$-cohomologically injective, for some ideal $I$ of $R$, if and only if $M$ is free.
More generally, the same statement holds if we replace $R$ with a Gorenstein module $N$.
\end{proposition}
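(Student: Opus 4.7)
The plan is to reduce both implications to Remark \ref{ex-pairs}, by examining only the top term of a minimal injective resolution. For the easier implication, observe that if $M$ is free, then $M$ is a finite flat $R$-module, so Remark \ref{ex-pairs}(ii) directly applies and shows that $M, N'$ is $I$-cohomologically injective for \emph{every} ideal $I$ and every $R$-module $N'$; this handles both the case $N' = R$ and the case $N' = N$ a Gorenstein module.

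For the converse, suppose the pair $M, N'$ is $I$-cohomologically injective for some ideal $I$, where $N'$ is either $R$ itself (Gorenstein) or a Gorenstein module $N$. By Remark \ref{ex-pairs}(i), I may test the condition against the minimal injective resolution $E^\bullet$ of $N'$. The key structural input I would use is that, with $d = \dim N'$ equal to the injective dimension of $N'$, the top nonzero term of $E^\bullet$ has the form $E^d \cong E_R(R/\mathfrak{m})^{r}$ for some integer $r \geq 1$: when $N' = R$ this is the classical Bass description of the minimal injective resolution of a Gorenstein local ring (with $r = 1$), and when $N'$ is a Gorenstein module it follows from the Cousin-complex characterization built into the definition (with $r$ positive because $\mathfrak{m} \in \Supp_R N'$). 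Since $M$ is finite, one has the identity $H^0_I(\Hom_R(M,-)) = \Hom_R(M, H^0_I(-))$, and since $E_R(R/\mathfrak{m})$ is already $\mathfrak{m}$-torsion (hence $I$-torsion for the proper ideal $I\subseteq\mathfrak{m}$), this yields
\[ H^0_I(\Hom_R(M, E^d)) \;=\; \Hom_R\bigl(M, E_R(R/\mathfrak{m})\bigr)^{r}. \]
The $I$-cohomological injectivity hypothesis forces this module, and hence its direct summand $\Hom_R(M, E_R(R/\mathfrak{m}))$, to be injective. Applying the final assertion of Remark \ref{ex-pairs}(ii) (with the maximal ideal playing the role of $I$ and $E = E_R(R/\mathfrak{m})$, so that $H^0_{\mathfrak{m}}(E) = E$ and Matlis duality provides the required faithfulness) then gives that $M$ is free.

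The only ingredient not already furnished by Remark \ref{ex-pairs} is the structural statement about the top term of the minimal injective resolution; this is where I expect the main, and rather mild, obstacle to lie, particularly in the Gorenstein-module case, where one must unpack the Cousin-complex definition and verify that the Bass number $\mu_d(\mathfrak{m}, N')$ is positive. Once this is in hand the argument is essentially formal: the hypothesis is invoked only at the top of $E^\bullet$, the lower terms play no role, and everything funnels into the freeness criterion of Remark \ref{ex-pairs}(ii).
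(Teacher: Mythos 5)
Your proposal is correct and follows essentially the same route as the paper: both directions reduce to Remark \ref{ex-pairs}(ii), with the converse hinging on the top term of the minimal injective resolution being $E_R(R/\mathbf{m})^{r}$ (via the Bass/Cousin description for a Gorenstein ring or Gorenstein module) together with $H^0_I(E_R(R/\mathbf{m}))=E_R(R/\mathbf{m})$ since $I\subset\mathbf{m}$. The only point you flag as a possible obstacle, the positivity of $\mu^d(\mathbf{m},N')$, is indeed standard (it equals the injective dimension criterion for finite modules over a local ring) and is implicitly supplied by Sharp's description of the Cousin complex, which the paper cites.
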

\begin{proof} First, the pair  $M, R$ is easily seen to be $I$-cohomologically injective if $M$ is free. For the converse,  since the given local ring $(R, {\bf m})$ is Gorenstein, the module $E_R (R/\textbf{m})$ is part of a minimal injective resolution of $R$. Note
$$E_R (R/\textbf{m})=H^0_{\bf m} (E_R (R/\textbf{m})) \subset H^0_{I} (E_R (R/\textbf{m}))\subset E_R (R/\textbf{m}),$$ and so these are all equalities. Now, using Remark \ref{ex-pairs}(ii), we derive that the module $H^0_I({\rm Hom}_R(M, E_R (R/\textbf{m}))) \cong \Hom_R (M, H^0_{\bf m} (E_R (R/\textbf{m})))\cong H^0_{\bf m}(M,E_R(R/{\bf m}))$ is injective if and only if $M$ is free. Thus, $M$ is free if $M, R$ is $I$-cohomologically injective, as needed.

Finally, suppose that the $R$-module $N$ is Gorenstein and that the pair $M,N$ is $I$-cohomologically injective. By \cite[Proposition 3.5]{Sharp2}, if $\dim N=n$ then the $n$-th term in $C^{\bullet}(N)$ is given by $C^n(N) = E_R (R/ \textbf{m})^{\mu^n(\textbf{m}, N)}$, where $\mu^n(\textbf{m}, N)$ is the $n$-th Bass number of $N$ with respect to $\textbf{m}$.
Hence, the hypothesis forces the module $$H^0_I({\rm Hom}_R(M, C^n(N)))\cong \Hom_R (M, H^0_I (E_R (R/\textbf{m})))^{\mu^n(\textbf{m}, N)}$$ to be injective, so that $\Hom_R (M, H^0_I (E_R (R/\textbf{m})))$ must be injective as well, which as seen above is equivalent to $M$ being free.
\end{proof}

Now, motivated by the above proposition and Remark \ref{ex-pairs}(ii), we can naturally consider the following complementary question.


\begin{question}\rm
Let $R$ be a local ring. If $M$ is finite with $\pd_RM<\infty$, is it true that 
the pair $M, N$ is $I$-cohomologically injective for every (or at least for some) non-Gorenstein $R$-module $N$? 


\end{question}


The theorem below is the main result of this subsection. It will be also a crucial tool in other parts of the paper such as in the proof of Proposition \ref{coh formula new} and Theorem \ref{gpdfinite}.

\begin{theorem}\label{generalizedweaklycofinite} Suppose $M$ is finite and $N$ is weakly Laskerian. Assume that the pair $M, N$ is $I$-cohomologically injective. If there is an integer $s\geq 0$ such that $H^i_I(M,N)$ is $I$-weakly cofinite for all $i\neq s$, then  $H^s_I(M,N)$ is $I$-weakly cofinite.

\end{theorem}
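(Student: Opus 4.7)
The plan is to realize $H^\ast_I(M,N)$ as the cohomology of a complex of injective modules and then run a standard hyperhomology spectral sequence for $\Hom_R(R/I,-)$, mimicking the argument of \cite[Theorem 3.1]{DAM} (where, for $M=R$, the fact that $\Gamma_I$ of an injective is injective is available for free). Fix an injective resolution $E^\bullet$ of $N$ and set $C^\bullet := H^0_I(\Hom_R(M,E^\bullet))$. Since $H^i_I(M,-)$ is the right derived functor of the left-exact functor $\Gamma_I(M,-) = H^0_I(\Hom_R(M,-))$, and since $H^i_I(M,E) = \varinjlim \Ext^i_R(M/I^nM,E)$ vanishes for injective $E$ and $i>0$, one has $H^i(C^\bullet) \cong H^i_I(M,N)$. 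The hypothesis that the pair $M,N$ is $I$-cohomologically injective is precisely that each $C^i$ is injective.

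Next I would apply $\Hom_R(R/I,-)$ to $C^\bullet$. Tensor-Hom adjunction, together with the $I$-torsion of $R/I$, yields
$$\Hom_R(R/I,C^i) \cong \Hom_R(R/I,\Hom_R(M,E^i)) \cong \Hom_R(M/IM,E^i).$$
Because each $C^i$ is injective, the ``by-columns'' hyperhomology spectral sequence for $\Hom_R(R/I,-)$ applied to $C^\bullet$ degenerates and identifies the hyperderived functor with $H^n\bigl(\Hom_R(R/I,C^\bullet)\bigr) \cong \Ext^n_R(M/IM,N)$, which is weakly Laskerian by Remark \ref{obss}(i). The ``by-rows'' spectral sequence then reads
$$E_2^{p,q} = \Ext^p_R\bigl(R/I,H^q_I(M,N)\bigr) \Longrightarrow \Ext^{p+q}_R(M/IM,N),$$
and by the hypothesis of the theorem every entry with $q \neq s$ is weakly Laskerian.

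The final ingredient is a page-by-page chase along the row $q=s$. For $r\geq 2$, the outgoing differential $d_r\colon E_r^{p,s}\to E_r^{p+r,\,s-r+1}$ lands on the row $s-r+1\neq s$, and the incoming differential $d_r\colon E_r^{p-r,\,s+r-1}\to E_r^{p,s}$ originates on the row $s+r-1\neq s$; both of those rows consist of weakly Laskerian modules at every page. The Serre-type stability of weakly Laskerian modules (Remark \ref{obss}(iv)) then implies that $E_r^{p,s}$ is weakly Laskerian if and only if $E_{r+1}^{p,s}$ is. Since the spectral sequence is first-quadrant and $E_\infty^{p,s}$ is a subquotient of the weakly Laskerian abutment, a finite downward iteration gives that $E_2^{p,s} = \Ext^p_R(R/I,H^s_I(M,N))$ is weakly Laskerian for every $p\geq 0$. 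Combined with $\Supp_R H^s_I(M,N)\subseteq V(I)$, this is exactly the $I$-weak cofiniteness of $H^s_I(M,N)$.

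The main delicate point I anticipate is setting up $C^\bullet$ correctly as a complex of injectives computing $H^\ast_I(M,N)$, which is where the $I$-cohomologically injective hypothesis is used; once this is in place, the remainder of the argument reduces to a routine Serre-subcategory chase.
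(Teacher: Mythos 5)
Your proposal is correct and follows essentially the same route as the paper's proof: the same spectral sequence $\Ext^p_R(R/I,H^q_I(M,N))\Rightarrow_p\Ext^{p+q}_R(M/IM,N)$ is extracted from the double complex built on the injective complex $H^0_I(\Hom_R(M,E^\bullet))$, and the conclusion is reached by the same page-by-page Serre-subcategory chase along the row $q=s$ (the paper carries this out following the argument of \cite[Proposition 2.5]{MV}). No gaps.
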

\begin{proof}
By taking a projective resolution $P_\bullet$ of $R/I$ as well as an injective resolution $E^\bullet$ of $N$ as in Definition \ref{cohomolinject}, the double complex
$$\Hom_R(P_\bullet,H^0_I({\rm Hom}_R(M,E^\bullet)))\cong \Hom_R(P_\bullet,H^0_I(M,E^\bullet))$$
yields a spectral sequence $$E_2^{p,q}=\Ext^p_R(R/I,H^q_I(M,N))\Rightarrow_p\Ext^{p+q}_R(M/IM,N).$$


We now argue in the same fashion as in \cite[Proposition 2.5]{MV}. By assumption, the $R$-modules $\Ext^i_R(M/IM,N)$ and $E_r^{p,q}$ are weakly Laskerian for all $r\geq 2$ and $p, q, i\geq 0 $, with $q\neq s$. In particular, as $E_\infty^{p,q}$ is a subquotient of $\Ext^{p+q}_R(M/IM,N)$, it must be weakly Laskerian as well. Notice also that given $p,q\geq0$, there is $r$ large enough such that $E_r^{p,q}$ is weakly Laskerian.

Now, consider the differentials $d_r^{p,q}:E_r^{p,q}\rightarrow E_r^{p+r,q-r+1}$ for $r\geq2$. For each $q\geq0$, either $E_r^{p,q}$ or $E_r^{p+r,s-q+1}$ is weakly Laskerian and then so is $\im d_r^{p,q}$. If we let $r\geq2$ be large enough so that $E_{r+1}^{p,s}$ is weakly Laskerian, then from the exact sequence
$$\xymatrix@=1em{
0\ar[r] & \im d_r^{p-r,s+r-1}\ar[r] & \ker d_r^{p,s}\ar[r] & E_{r+1}^{p,s}\ar[r] & 0
}$$ we deduce that $\ker d_r^{p,s}$ is weakly Laskerian. Therefore, by using the exact sequence
$$\xymatrix@=1em{
0\ar[r] & \ker d_r^{p,s}\ar[r] & E_r^{p,s}\ar[r] & \im d_r^{p,s}\ar[r] & 0
}$$ we deduce the weak Laskerianess of $E_r^{p,s}$. By applying this argument successively, we conclude that $E_2^{p,s}$ is weakly Laskerian.
\end{proof}

\begin{remark}\label{cofiniteremark}\rm
If in Theorem \ref{generalizedweaklycofinite} we assume that $N$ is finite, then we can replace {\it $I$-weakly cofinite} with {\it $I$-cofinite} in the statement.
\end{remark}

\begin{corollary}
Suppose the hypotheses of Theorem \ref{generalizedweaklycofinite}. If there is an integer $s\geq 0$ such that $H^i_{I} (M,N)=0$ for all $i \neq s$, then $H^s_{I} (M,N)$ is $I$-weakly cofinite.
\end{corollary}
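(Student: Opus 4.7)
The plan is to observe that this corollary is essentially a direct specialization of Theorem \ref{generalizedweaklycofinite}, so no new spectral sequence argument is needed. I would begin by noting that the zero $R$-module is trivially $I$-weakly cofinite: its support is empty (hence contained in $V(I)$), and $\Ext^i_R(R/I, 0) = 0$ is weakly Laskerian for every $i \geq 0$ (the zero module being weakly Laskerian by Definition \ref{defWL}(i) or Remark \ref{obss}(i)).

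Consequently, under the standing hypotheses of Theorem \ref{generalizedweaklycofinite} (namely that $M$ is finite, $N$ is weakly Laskerian, and the pair $M, N$ is $I$-cohomologically injective), the vanishing assumption $H^i_I(M,N) = 0$ for all $i \neq s$ automatically implies that $H^i_I(M,N)$ is $I$-weakly cofinite for every $i \neq s$. The quoted theorem then applies verbatim and yields the $I$-weak cofiniteness of $H^s_I(M,N)$.

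There is no real obstacle here; the only subtle point is to make sure the hypotheses of Theorem \ref{generalizedweaklycofinite} are indeed inherited (which they are, since the corollary's assertion begins with \emph{Suppose the hypotheses of Theorem} \ref{generalizedweaklycofinite}). Accordingly, the proof reduces to a single sentence explicitly invoking the theorem after remarking on the triviality of the $I$-weak cofiniteness of the zero module.
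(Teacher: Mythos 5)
Your proposal is correct and matches the paper's (implicit) intent exactly: the corollary is stated without proof precisely because the zero module is trivially $I$-weakly cofinite, so the vanishing hypothesis reduces it to a verbatim application of Theorem \ref{generalizedweaklycofinite}. Nothing further is needed.
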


We then record \cite[Theorem 3.1]{DAM} by taking $M=R$ in Theorem \ref{generalizedweaklycofinite}.

\begin{corollary}
If $N$ is weakly Laskerian and there is an integer $s\geq0$ such that $H^i_I(N)$ is $I$-weakly cofinite for all $i\neq s$, then $H^s_I(N)$ is $I$-weakly cofinite.
\end{corollary}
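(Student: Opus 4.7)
The plan is to recognize this statement as an immediate specialization of Theorem~\ref{generalizedweaklycofinite} obtained by setting $M = R$, so the task reduces to verifying the two hypotheses of that theorem in this special case: that the pair $R, N$ is $I$-cohomologically injective, and that $H^i_I(R, N)$ coincides with the ordinary local cohomology $H^i_I(N)$.

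The second identification is automatic from Definition~\ref{Herdef}: letting $M = R$ in the defining direct limit yields $H^i_I(R, N) \cong \varinjlim_n \operatorname{Ext}^i_R(R/I^n, N) = H^i_I(N)$, which is explicitly noted right after that definition. The first check is also essentially free. Indeed, $R$ is finite and flat (even free), so Remark~\ref{ex-pairs}(ii) applies directly with $F = R$; one can also see this by hand via the isomorphism $\operatorname{Hom}_R(R, E^i) \cong E^i$ together with the standard fact that $H^0_I$ sends injectives to injectives (since an injective module splits into a direct sum of indecomposable injectives $E_R(R/\mathfrak{p})$ and $H^0_I$ simply retains the summands with $\mathfrak{p} \in V(I)$, which are themselves injective). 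Thus the pair $R, N$ is $I$-cohomologically injective for arbitrary $N$.

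With these two verifications in place, the hypotheses of Theorem~\ref{generalizedweaklycofinite} are met: $M = R$ is finite, $N$ is weakly Laskerian by assumption, and $H^i_I(R, N) = H^i_I(N)$ is $I$-weakly cofinite for all $i \neq s$. The conclusion of that theorem then states that $H^s_I(R, N) = H^s_I(N)$ is $I$-weakly cofinite, which is exactly what we want. There is no real obstacle here, since the only conceptual step, namely the $I$-cohomological injectivity of $R, N$, has already been recorded in Remark~\ref{ex-pairs}(ii); the corollary is simply the $M = R$ specialization advertised in the sentence preceding it.
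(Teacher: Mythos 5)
Your proof is correct and matches the paper's approach exactly: the paper obtains this corollary by setting $M=R$ in Theorem~\ref{generalizedweaklycofinite}, with the $I$-cohomological injectivity of the pair $R,N$ covered by Remark~\ref{ex-pairs}(ii). Your explicit verification of the two hypotheses is a sound elaboration of what the paper leaves implicit.
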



Another immediate byproduct of Theorem \ref{generalizedweaklycofinite} is the following helpful result which will be used several times in this note.

\begin{corollary}\label{ss+1}
Suppose $M$ is finite and $N$ is weakly Laskerian. Assume that the pair $M, N$ is $I$-cohomologically injective. If there are distinct integers $s, k\geq 0$ such that $H^i_I(M,N)$ is $I$-weakly cofinite for all $i\neq s, k$, then $H^s_I(M,N)$ is $I$-weakly cofinite if and only if $H^{k}_I(M,N)$ is $I$-weakly cofinite.
\end{corollary}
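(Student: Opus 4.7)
The plan is to derive this corollary as a direct double application of Theorem \ref{generalizedweaklycofinite}. The statement has the form of an ``if and only if'' between the $I$-weak cofiniteness of $H^s_I(M,N)$ and $H^k_I(M,N)$ under the blanket assumption that all other generalized local cohomology modules $H^i_I(M,N)$ (with $i\neq s,k$) are already known to be $I$-weakly cofinite. So both directions reduce to adding one single module to the list of those known to be $I$-weakly cofinite and then invoking Theorem \ref{generalizedweaklycofinite} to conclude that the remaining one is $I$-weakly cofinite as well.

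More precisely, first I would assume $H^s_I(M,N)$ is $I$-weakly cofinite. Combining this with the standing hypothesis, one has that $H^i_I(M,N)$ is $I$-weakly cofinite for \emph{every} index $i \neq k$. Since the pair $M,N$ is $I$-cohomologically injective and the hypotheses on $M$ and $N$ (finite and weakly Laskerian, respectively) are in force, Theorem \ref{generalizedweaklycofinite} applies with the distinguished index being $s = k$, and we conclude that $H^k_I(M,N)$ is $I$-weakly cofinite. The converse implication is completely symmetric: assuming $H^k_I(M,N)$ is $I$-weakly cofinite, every $H^i_I(M,N)$ with $i\neq s$ is $I$-weakly cofinite, and Theorem \ref{generalizedweaklycofinite} (this time with distinguished index $s$) yields the $I$-weak cofiniteness of $H^s_I(M,N)$.

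There is no substantive obstacle; the entire content of the corollary is the observation that the conclusion of Theorem \ref{generalizedweaklycofinite} can be fed back as a hypothesis of another instance of the same theorem, the role of the ``missing'' index simply being switched between $s$ and $k$. Hence the proof will consist of little more than two invocations of the preceding theorem, and can be written in just a couple of lines.
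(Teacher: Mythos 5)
Your proposal is correct and is exactly the intended argument: the paper presents this corollary as an immediate byproduct of Theorem \ref{generalizedweaklycofinite}, obtained by feeding the assumed weak cofiniteness of one of the two exceptional modules back into the theorem with the other index playing the role of $s$. Nothing further is needed.
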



\subsection{Cohomological dimension and weak cofiniteness}
In this part, we apply Theorem \ref{generalizedweaklycofinite} in order to obtain results concerning the $I$-weak cofiniteness of $H^i_{I} (M,N)$, particularly when $\cd_I(M,N) \leq 3$, where the number $\cd_I(M,N)$ is as defined below.


\begin{definition}\label{gcdim}\rm (\cite{JN}) Let $M, N$ be $R$-modules. The ({\it generalized}) {\it cohomological dimension} of $M, N$ with respect to the ideal $I$ is defined as $$\cd_{I} (M,N)= \sup \{i \geq 0 \mid H^i_{I} (M,N) \neq 0\}.
$$ Note that, if $M=R$, we recover the standard definition of the cohomological dimension $\cd_{I}N$ of $N$ with respect to $I$. Moreover, it is usual to define the cohomological dimension of $I$, denoted $\cd I$, as the smallest integer $m\geq 0$ such that $H^j_{I}(T)=0$ for all $R$-modules $T$ and  all integers $j > m$. By \cite[Proposition 6.1.11]{Brodman}, we have an equality $$\cd I=\cd_{I}R.$$ This invariant will be used in Corollaries \ref{cd=1} and \ref{cd=1-bis}. It is worth recalling that, under suitable hypotheses on a given local ring $(R, {\bf m})$, the number $\cd I$ is closely related to the punctured scheme 
${\rm Spec}\,R/I\,\setminus \{{\bf m}\}$ being connected (see \cite{H0} and \cite{Huneke-L}).


\end{definition}

Now we observe the following theorem.

\begin{theorem}\label{cd123}
Suppose $M$ is finite and $N$ is weakly Laskerian. Suppose the pair $M, N$ is $I$-cohomologically injective. Then the following assertions hold:
\begin{itemize}
    \item [\rm(i)] If $\cd_I(M,N)=1$, then $H^1_I(M,N)$ is $I$-weakly cofinite;
    
    \item [\rm(ii)] If $\cd_I(M,N)=2$, then $H^1_I(M,N)$ is $I$-weakly cofinite if and only if  $H^2_I(M,N)$ is $I$-weakly cofinite;
    
    \item [\rm(iii)] Suppose that $\cd_I(M,N)=3$ and that $i,j,k$ are three different indices in $\{1,2,3\}$. If $H^i_I(M,N)$ is $I$-weakly cofinite, then $H^j_I(M,N)$ is $I$-weakly cofinite if and only if  $H^k_I(M,N)$ is $I$-weakly cofinite.
\end{itemize}
\end{theorem}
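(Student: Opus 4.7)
The plan is to reduce each of (i), (ii), (iii) to a single invocation of Theorem~\ref{generalizedweaklycofinite} (or its Corollary~\ref{ss+1}) by simply counting, in each case, how many of the modules $H^i_I(M,N)$ are not \emph{a priori} known to be $I$-weakly cofinite. Two preliminary observations drive the whole argument: first, as already recorded in the Introduction, the module $H^0_I(M,N)$ is $I$-weakly cofinite whenever $M$ is finite and $N$ is weakly Laskerian; second, by the very definition of $\cd_I(M,N)$, the module $H^i_I(M,N)$ vanishes -- and is therefore trivially $I$-weakly cofinite -- for every $i > \cd_I(M,N)$.

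For (i), with $\cd_I(M,N) = 1$, the two observations together show that $H^i_I(M,N)$ is $I$-weakly cofinite for every $i \neq 1$, and Theorem~\ref{generalizedweaklycofinite} applied with $s = 1$ immediately delivers the $I$-weak cofiniteness of $H^1_I(M,N)$.

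For (ii), with $\cd_I(M,N) = 2$, the same preliminary facts leave $\{1, 2\}$ as the only indices where the $I$-weak cofiniteness of $H^i_I(M,N)$ is not automatic; this is precisely the setup of Corollary~\ref{ss+1} with $s = 1$ and $k = 2$, which yields the stated equivalence. Part (iii) is essentially the same bookkeeping: with $\cd_I(M,N) = 3$ the potentially troublesome indices form the set $\{1,2,3\}$, and the hypothesis that $H^i_I(M,N)$ is $I$-weakly cofinite for one fixed $i$ in that set shrinks the remaining uncertainty to $\{j,k\}$, after which Corollary~\ref{ss+1} again supplies the desired biconditional.

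I do not anticipate any genuine obstacle here: the substantive input was absorbed into Theorem~\ref{generalizedweaklycofinite} via the spectral sequence afforded by $I$-cohomological injectivity of the pair $M, N$, and the only thing to verify is that the two preliminary observations collapse the set of ``problematic'' indices down to either one (in (i)) or two (in (ii) and (iii)), so that the previously established criterion applies verbatim.
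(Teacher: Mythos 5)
Your proposal is correct and follows essentially the same route as the paper: establish that $H^0_I(M,N)$ is $I$-weakly cofinite and that $H^i_I(M,N)=0$ for $i>\cd_I(M,N)$, then invoke Theorem~\ref{generalizedweaklycofinite} and Corollary~\ref{ss+1} to handle the one or two remaining indices. No gaps.
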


\begin{proof}
Consider first the fact that the module $H^0_I(M,N)\cong H^0_I({\rm Hom}_R(M,N))$ embeds into ${\rm Hom}_R(M,N)$, which is known to be $I$-weakly cofinite because $M$ is finite and $N$ is weakly Laskerian. Thus, $H^0_I(M,N)$ is $I$-weakly cofinite as well. Now,  Corollary \ref{ss+1} applies.
\end{proof}

\begin{remark}\label{H^1remark}\rm
Using Proposition \ref{fivetermsexactsequence}, and in the setting of Theorem \ref{cd123}(ii), we deduce that the $I$-weak cofiniteness of any of the modules $$\Hom_R(M,H^1_I(N)),\  H^1_I(\Hom_R(M,N)),\  H^1_I(M,N),\ H^2_I(M,N)$$ yields the same property for all the others. It is worth mentioning that, for the  ordinary local cohomology modules $H^i_I(N)$ of the weakly Laskerian $R$-module $N$, the $I$-weak cofiniteness is known to hold in some situations, for instance (supposing $R$ is local) if ${\rm dim}\,R\leq 3$ (see \cite[Corollary 3.4]{DAM}), and if $\dim M\leq 3$ or $\dim N\leq 3$ (see \cite[Corollary 5.3]{CGH}).
\end{remark}


Letting $M=R$ and taking Remark \ref{ex-pairs}(ii) into account, we can immediately record Theorem \ref{cd123} for ordinary local cohomology as the following corollary.

\begin{corollary}\label{cdordinary}
Suppose $N$ is weakly Laskerian. The following assertions hold:
\begin{itemize}
    \item [\rm(i)] If $\cd_IN=1$, then $H^1_I(N)$ is $I$-weakly cofinite;
    
    \item [\rm(ii)] If $\cd_IN=2$, then $H^1_I(N)$ is $I$-weakly cofinite if and only if  $H^2_I(N)$ is $I$-weakly cofinite;
    
    \item [\rm(iii)] Suppose that $\cd_IN=3$ and that $i,j,k$ are three different indices in $\{1,2,3\}$. If $H^i_I(N)$ is $I$-weakly cofinite, then $H^j_I(N)$ is $I$-weakly cofinite if and only if $H^k_I(N)$ is $I$-weakly cofinite.
\end{itemize}
\end{corollary}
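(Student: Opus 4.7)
The plan is to deduce the corollary as a direct specialization of Theorem \ref{cd123} by setting $M=R$. Two preliminary observations will make the reduction automatic. First, by Definition \ref{Herdef}, one has $H^i_I(R,N) = H^i_I(N)$ for every $i\geq 0$; in particular $\cd_I(R,N) = \cd_I N$, so the assumptions on the (generalized) cohomological dimension in each of (i)--(iii) translate verbatim into the corresponding assumptions of Theorem \ref{cd123}.

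Second, and crucially, I need to verify that the pair $R,N$ is $I$-cohomologically injective for every $R$-module $N$. This is precisely the content of Remark \ref{ex-pairs}(ii) applied to the finite flat $R$-module $F=R$: for any injective resolution $E^{\bullet}$ of $N$, the canonical isomorphism $\Hom_R(R, E^i) \cong E^i$ reduces $H^0_I(\Hom_R(R, E^i))$ to $H^0_I(E^i)$, which is itself injective since every injective $R$-module is a direct sum of modules of the form $E_R(R/\mathfrak{p})$ and $H^0_I$ acts on each such summand either as the identity (when $\mathfrak{p}\supseteq I$) or as zero.

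With these two ingredients secured, each of parts (i), (ii), (iii) follows immediately from the corresponding part of Theorem \ref{cd123} with $M=R$. No further argument is required: the real work lies in the spectral sequence machinery behind Theorem \ref{generalizedweaklycofinite} and Corollary \ref{ss+1}, which has already been carried out in the generalized setting. I therefore do not anticipate any genuine obstacle here; had there been one, it would have been precisely the verification of $I$-cohomological injectivity, which is exactly why the paper emphasizes Remark \ref{ex-pairs}(ii) in the narrative immediately preceding the corollary.
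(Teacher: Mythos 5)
Your proposal is correct and follows exactly the paper's route: the authors likewise obtain the corollary by setting $M=R$ in Theorem \ref{cd123} and invoking Remark \ref{ex-pairs}(ii) to see that the pair $R,N$ is $I$-cohomologically injective. Your added justification that $H^0_I(E)$ is injective (via the decomposition of an injective module into indecomposables $E_R(R/\mathfrak{p})$, on which $\Gamma_I$ acts as the identity or as zero) is a standard fact the paper takes for granted, and it is sound.
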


	

Next, before proving further results, we introduce some notations for convenience.

\begin{notation}\rm  Let $M,N$ be two $R$-modules. We write
$$g(M, N)=\inf_{j\geq 0}\{\depth_I\Ext^j_R(M,N)\}\quad\mbox{and}\quad c(M, N)=\sup_{j\geq 0}\{\cd_I \Ext^j_R(M,N)\}.$$
\end{notation}

There is, therefore, a deviation  $\delta(M, N)=c(M, N)-g(M, N)\geq 0$, and notice that due to the spectral sequence of Lemma \ref{ss}(i) we must have $H^i_I(M,N)=0$ if either $i<g(M, N)$ or $i>c(M, N)$. The next theorem discusses the $I$-weak cofiniteness of $H^i_{I} (M,N)$ in the cases of deviation $0$ and $1$.


\begin{theorem}\label{gcnumbers}
Suppose $M$ is finite and $N$ is weakly Laskerian.  Assume that any one of the following situations holds:
\begin{itemize}
    \item [\rm(i)] $\delta(M, N)=0$;
    \item [\rm(ii)] $\delta(M, N)=1$ and either $H^{g(M, N)}_I(\Ext^j_R(M,N))$ or $H^{c(M, N)}_I(\Ext^j_R(M,N))$ is $I$-weakly cofinite, for each $j\geq 0$.
\end{itemize} Then, $H^i_I(M,N)$ is $I$-weakly cofinite for all $i\geq 0$.
\end{theorem}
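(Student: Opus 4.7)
The plan is to exploit the spectral sequence
$$E_2^{p,q} = H^p_I(\Ext^q_R(M,N)) \Rightarrow_p H^{p+q}_I(M,N)$$
from Lemma \ref{ss}(i). By the very definitions of $g=g(M,N)$ and $c=c(M,N)$, for every $q\geq 0$ the module $H^p_I(\Ext^q_R(M,N))$ vanishes whenever $p<g$ or $p>c$. Hence in case (i) the second page is concentrated on a single row $p=g=c$, and in case (ii) it is supported on exactly two rows $p=g$ and $p=c=g+1$. In both cases the differentials $d_r^{p,q}\colon E_r^{p,q}\to E_r^{p+r,q-r+1}$ for $r\geq 2$ have either trivial source or trivial target (the column $p+r$ lies strictly above $c$), so the spectral sequence degenerates at $E_2$.

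Next I would argue that every $E_2^{p,q}$ is $I$-weakly cofinite. Since $M$ is finite and $N$ is weakly Laskerian, each $\Ext^q_R(M,N)$ is weakly Laskerian by Remark \ref{obss}(i). In case (i), the only possibly non-vanishing local cohomology of $\Ext^q_R(M,N)$ sits in degree $g$; since the zero module is $I$-weakly cofinite, the corollary to Theorem \ref{generalizedweaklycofinite} (its ordinary-cohomology specialization obtained by setting $M=R$) forces $H^g_I(\Ext^q_R(M,N))$ to be $I$-weakly cofinite as well. In case (ii), the hypothesis furnishes $I$-weak cofiniteness of one among $H^g_I(\Ext^q_R(M,N))$ and $H^c_I(\Ext^q_R(M,N))$; combined with the vanishing of all other $H^i_I(\Ext^q_R(M,N))$, the same corollary propagates $I$-weak cofiniteness to the remaining member of the pair.

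Finally I would translate the $E_\infty$-page into information about $H^n_I(M,N)$. In case (i) the collapsed spectral sequence yields, for each $n\geq g$, an isomorphism $H^n_I(M,N) \cong H^g_I(\Ext^{n-g}_R(M,N))$, which is $I$-weakly cofinite by the previous step; for $n<g$ the module is zero, hence trivially $I$-weakly cofinite. In case (ii) the filtration on $H^n_I(M,N)$ has only two possibly non-trivial graded pieces, giving a short exact sequence
$$0\longrightarrow H^{g+1}_I(\Ext^{n-g-1}_R(M,N))\longrightarrow H^n_I(M,N)\longrightarrow H^g_I(\Ext^{n-g}_R(M,N))\longrightarrow 0,$$
whose outer terms are $I$-weakly cofinite, so Remark \ref{obss}(iv) delivers the $I$-weak cofiniteness of the middle term.

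The step most likely to require care is the bookkeeping inside case (ii): one must both verify the $E_2$-degeneration (which is automatic once the two-row support is established) and correctly identify which end of the short exact sequence carries the subobject versus the quotient. Everything else reduces to clean applications of the already-recorded corollary of Theorem \ref{generalizedweaklycofinite} and the two-out-of-three property of weak cofiniteness in short exact sequences.
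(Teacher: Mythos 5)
Your proposal is correct and follows essentially the same route as the paper: the same spectral sequence from Lemma \ref{ss}(i), the same observation that the second page is supported on the columns $p=g$ and $p=c$ (forcing degeneration at $E_2$ when $c-g\leq 1$), the same use of the $M=R$ case of Theorem \ref{generalizedweaklycofinite} (i.e.\ Corollary \ref{ss+1}) to get weak cofiniteness of the $E_2$-terms, and the same isomorphism in case (i) and short exact sequence in case (ii), with the sub/quotient roles correctly identified.
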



\begin{proof}
Applying Corollary \ref{ss+1} to the pair $R, \Ext^j_R(M,N)$ for each $j$, we deduce that $H^i_I(\Ext^j_R(M,N))$ is $I$-weakly cofinite for all $i\geq 0$, in both situations. Now, consider the spectral sequence of Lemma \ref{ss}(i),
$$E_2^{i,j}=H^i_I(\Ext^j_R(M,N))\Rightarrow_i H^{i+j}_I(M,N).$$
Set for simplicity $g=g(M, N)$ and $c=c(M, N)$. Note that $E_2^{i,j}=0$ whenever  $i<g$ or $i>c$. It follows that, if $g=c$, then $E$ has only one vertical line $E_2^{c,j}$ with possible non-zero modules and therefore, by convergence,
$$H^c_I(\Ext^{i-c}_R(M,N))\cong H^i_I(M,N) \quad \mbox{for \,all} \quad i\geq 0,$$ where of course $\Ext^{k}_R(M,N)=0$ whenever $k<0$. This yields the result in this case.

Now suppose $g+1=c$. Then, $E$ has only two vertical lines with possible non-zero modules. Since its differentials in the second page have bidegree $(2,-1)$, i.e., $E_2^{i,j}\rightarrow E_2^{i+2,j-1}$ for any $i,j$, we  deduce that $E_2=E_\infty$. By virtue of convergence once again, there is for each $i\geq0$ a short exact sequence
$$\xymatrix@=1em{
0\ar[r] & H^c_I(\Ext^{i-c}_R(M,N))\ar[r] & H^i_I(M,N)\ar[r] & H^g_I(\Ext^{i-g}_R(M,N))\ar[r] & 0,
}$$ from which we derive the $I$-weak cofiniteness of $H^i_{I} (M,N)$.
\end{proof}

Now it seems plausible to propose the following question.

\begin{question}\rm Let $M$ be finite and $N$ weakly Laskerian, with $\delta (M, N)=\delta \geq 2$. Assume that, for each $j\geq 0$, at least $\delta$ of the modules $$H^{g(M, N)+k}_I(\Ext^j_R(M,N)), \quad k=0, \ldots, \delta,$$ are $I$-weakly cofinite. Is it true that $H^i_I(M,N)$ is  $I$-weakly cofinite for all $i\geq 0$?

\end{question}

Theorem \ref{gcnumbers} puts us in a position to establish the $I$-weak cofiniteness of the module $H^i_{I} (M,N)$ for all $i\geq 0$ whenever $\cd I=1$. This improves
\cite[Theorem 2.2]{MS}.

\begin{corollary}\label{cd=1}
Suppose $M$ is finite and $N$ is weakly Laskerian. If $\cd I =1$, then the $R$-module $H^i_I(M,N)$ is $I$-weakly cofinite for all $i\geq0$. In particular, $\Ass_RH^i_{I} (M,N)$ is finite for all $i\geq 0$.
\end{corollary}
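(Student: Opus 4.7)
The plan is to reduce the claim to Theorem \ref{gcnumbers} by showing that the hypothesis $\cd I = 1$ forces the deviation $\delta(M,N)$ to be at most $1$, and then verifying that the mild extra hypothesis needed in the case $\delta = 1$ is automatically satisfied.

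First I would use the equality $\cd I = \cd_I R$ and the very definition of $\cd I$ as the smallest integer $m$ with $H^j_I(T) = 0$ for every $R$-module $T$ and every $j > m$. Applying this with $T = \Ext^j_R(M,N)$ gives $\cd_I \Ext^j_R(M,N) \leq 1$ for every $j \geq 0$, and hence $c(M,N) \leq 1$. Since $g(M,N) \geq 0$ by definition, we obtain $\delta(M,N) \in \{0,1\}$.

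If $\delta(M,N) = 0$, Theorem \ref{gcnumbers}(i) immediately yields that $H^i_I(M,N)$ is $I$-weakly cofinite for all $i \geq 0$. If instead $\delta(M,N) = 1$, we must have $g(M,N) = 0$ and $c(M,N) = 1$, and to invoke Theorem \ref{gcnumbers}(ii) we need, for every $j \geq 0$, one of $H^0_I(\Ext^j_R(M,N))$ or $H^1_I(\Ext^j_R(M,N))$ to be $I$-weakly cofinite. I would exhibit the former: since $M$ is finite and $N$ is weakly Laskerian, Remark \ref{obss}(i) gives that $\Ext^j_R(M,N)$ is weakly Laskerian; the submodule $H^0_I(\Ext^j_R(M,N)) = \Gamma_I(\Ext^j_R(M,N))$ is therefore weakly Laskerian (weak Laskerianess passes to submodules directly from the definition), and it has support contained in $V(I)$, so by Remark \ref{obss}(ii) it is $I$-weakly cofinite. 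Thus Theorem \ref{gcnumbers}(ii) applies and again all $H^i_I(M,N)$ are $I$-weakly cofinite.

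Finally, the ``in particular'' assertion on associated primes follows at once from Remark \ref{obss}(iii). I do not anticipate a real obstacle here, since both cases of Theorem \ref{gcnumbers} have been arranged to handle exactly this situation; the only point requiring a line of care is the verification that the zeroth local cohomology of a weakly Laskerian module with support in $V(I)$ is $I$-weakly cofinite, which is transparent once one recalls that weak Laskerianess is inherited by submodules.
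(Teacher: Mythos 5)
Your proposal is correct and follows essentially the same route as the paper: both reduce to Theorem \ref{gcnumbers} by noting that $\cd I=1$ forces $c(M,N)\leq 1$, hence $\delta(M,N)\leq 1$, and both discharge the extra hypothesis in the $\delta=1$ case via the $I$-weak cofiniteness of $H^0_I(\Ext^j_R(M,N))\subset \Ext^j_R(M,N)$. The paper merely states this more tersely; your verification that weak Laskerianess passes to submodules is the same observation it relies on implicitly.
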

\begin{proof} Since $\cd I =1$ we obtain in particular that, for each $j$, the module $H^i_I(\Ext^j_R(M, N))$ vanishes for all $i\geq 2$. Moreover, $0\leq g(M, N)\leq c(M, N)\leq 1$, and recall that $H^0_I(\Ext^j_R(M, N))\subset \Ext^j_R(M, N)$ is $I$-weakly cofinite. 
Now the result follows by Theorem \ref{gcnumbers}.
\end{proof}

Another consequence is the following extension of \cite[Theorem 2.10]{MS}.

\begin{corollary}\label{cd=1-bis}
Suppose $M$ is finite and $N$ is weakly Laskerian. If $\cd I =1$, then the $R$-module $\Ext^i_R(M,H^j_I(N))$ is $I$-weakly cofinite for all $i,j\geq0$.
\end{corollary}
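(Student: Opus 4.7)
The plan is to combine Corollary \ref{cd=1} with the spectral sequence of Lemma \ref{ss}(ii), namely $E_2^{p,q}=\Ext^p_R(M,H^q_I(N))\Rightarrow_p H^{p+q}_I(M,N)$. Since $\cd I=1$ yields $\cd_I N\leq 1$, we have $H^q_I(N)=0$ for $q\geq 2$, so only the rows $q=0$ and $q=1$ can be nonzero and in particular it suffices to treat those two cases. The case $j=0$ is easy: $H^0_I(N)$ is a submodule of the weakly Laskerian module $N$, hence weakly Laskerian, so $\Ext^i_R(M,H^0_I(N))$ is weakly Laskerian by Remark \ref{obss}(i), and since its support lies in $V(I)$, Remark \ref{obss}(ii) ensures it is $I$-weakly cofinite.

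For $j=1$, with only rows $q=0,1$ surviving, one checks that $d_r=0$ for all $r\geq 3$, so that $E_3=E_\infty$. This produces two basic short exact sequences for each $p\geq 0$: from $E_\infty^{p,1}=\ker d_2^{p,1}$ we get
$$0\longrightarrow E_\infty^{p,1}\longrightarrow \Ext^p_R(M,H^1_I(N))\longrightarrow \im d_2^{p,1}\longrightarrow 0,$$
where $d_2^{p,1}\colon \Ext^p_R(M,H^1_I(N))\to \Ext^{p+2}_R(M,H^0_I(N))$, and from the filtration on the abutment
$$0\longrightarrow E_\infty^{p+1,0}\longrightarrow H^{p+1}_I(M,N)\longrightarrow E_\infty^{p,1}\longrightarrow 0.$$
The plan is then to show the two ends of the first sequence are $I$-weakly cofinite and invoke Remark \ref{obss}(iv).

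For the right end, $\im d_2^{p,1}$ is a submodule of $\Ext^{p+2}_R(M,H^0_I(N))$, which by the $j=0$ case is weakly Laskerian with support in $V(I)$; submodules of weakly Laskerian modules are weakly Laskerian, and the support is still in $V(I)$, so $\im d_2^{p,1}$ is $I$-weakly cofinite. For the left end $E_\infty^{p,1}$, I use the second short exact sequence: Corollary \ref{cd=1} tells us $H^{p+1}_I(M,N)$ is $I$-weakly cofinite, and $E_\infty^{p+1,0}$ is (by construction of the $E_3$-page) a quotient of $\Ext^{p+1}_R(M,H^0_I(N))$, which is weakly Laskerian with support in $V(I)$, hence $I$-weakly cofinite. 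Applying Remark \ref{obss}(iv) to the second sequence yields the $I$-weak cofiniteness of $E_\infty^{p,1}$, and applying it again to the first sequence completes the argument.

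The main obstacle is simply the bookkeeping of the spectral sequence: making sure that the two short exact sequences above are stated with the correct modules at the correct positions, and in particular that $E_\infty^{p+1,0}$ is legitimately a quotient (not merely a subquotient whose $I$-weak cofiniteness would require further work) of $\Ext^{p+1}_R(M,H^0_I(N))$. Since the rows for $q\geq 2$ are zero, no differentials land in $E_r^{p+1,0}$ from above on pages $r\geq 3$, so indeed $E_\infty^{p+1,0}=E_3^{p+1,0}=E_2^{p+1,0}/\im d_2^{p-1,1}$, and the plan goes through.
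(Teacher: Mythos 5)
Your argument is correct and follows essentially the same route as the paper: the two-row spectral sequence $E_2^{p,q}=\Ext^p_R(M,H^q_I(N))\Rightarrow H^{p+q}_I(M,N)$, the filtration short exact sequence $0\to E_\infty^{p+1,0}\to H^{p+1}_I(M,N)\to E_\infty^{p,1}\to 0$ combined with Corollary \ref{cd=1} to get $I$-weak cofiniteness of $E_\infty^{p,1}$, and then the exact sequence $0\to E_\infty^{p,1}\to E_2^{p,1}\to E_2^{p+2,0}$ to recover $E_2^{p,1}$. The only cosmetic difference is that you note $E_\infty^{p+1,0}$ is an honest quotient of $E_2^{p+1,0}$, whereas the paper just uses that it is a subquotient (which suffices, since weak Laskerianness passes to subquotients and the support stays in $V(I)$).
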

\begin{proof}
Consider the spectral sequence of Lemma \ref{ss}(ii),
$$E_2^{p,q}=\Ext^p_R(M,H^q_I(N))\Rightarrow_p H^{p+q}_I(M,N).$$
Note it has only two horizontal lines with possible non-zero modules. In particular, for each $i\geq 0$ there is a short exact sequence
$$\xymatrix@=1em{
0\ar[r] & E^{i,0}_\infty\ar[r] & H^i_I(M,N)\ar[r] & E_\infty^{i-1,1}\ar[r] & 0,
}$$ where $E_\infty^{i,0}$ and $E_\infty^{i-1,1}$ are subquotients of the modules $E_2^{i,0}=\Ext^{i}_R(M,H^0_I(N))$ and $E_2^{i-1, 1}=\Ext^{i-1}_R(M,H^1_I(N))$, respectively. Now recall $N$ is weakly Laskerian and hence so is $E^{i,0}_2$. Thus, $E^{i,0}_\infty$ is weakly Laskerian for all $i\geq0$. By Corollary \ref{cd=1}, $H^i_I(M,N)$ is $I$-weakly cofinite and therefore $E^{i-1,1}_\infty$ is $I$-weakly cofinite as well. Finally, given $i\geq0$, and noticing that $E_\infty^{i,1}\cong\ker(E_2^{i,1}\rightarrow E_2^{i+2,0})$, the result follows from the exact sequence $$\xymatrix@=1em{0\ar[r] & E_\infty^{i,1}\ar[r] & E_2^{i,1}\ar[r] & E_2^{i+2,0}.}$$\end{proof}

\begin{remark}\label{non-sharp-cd}\rm Suppose $R$ is local and $M, N$ are both finite, with $M$ having finite projective dimension over $R$, say equal to $p$. Set $n={\rm cd}_IN$. Then, by \cite[Corollary 2.3]{FJMS}, we have $H_I^i(M, N)=0$ for all $i>p+n$, while, for $i=p+n$,
$$H_I^{p+n}(M, N)\cong \Ext^p_R(M,H^n_I(N))\cong \Ext^p_R(M, R)\otimes_RH_I^n(N).$$ In particular, if $\cd I=1$ (which coincides with $\cd_IR$, by \cite[Proposition 6.1.11]{Brodman}) then $H_I^i(M, R)=0$ for all $i>p+1$ and $H_I^{p+1}(M, R)\cong \Ext^p_R(M,H^1_I(R))\cong \Ext^p_R(M, R)\otimes_RH_I^1(R)$. Now it is worth illustrating that $H_I^{p+1}(M, R)$ can vanish, i.e., the inequality $\cd_I(M, R)\leq p+1$ can be strict. Indeed, assume that $(R, {\bf m})$ is a Gorenstein local ring of dimension 1, and that $p=1$ (e.g., $M=R/(x)$ for some regular element $x\in {\bf m}$). Then, $\cd {\bf m}=1$ and $H_{\bf m}^1(R)\cong E_R(R/{\bf m})$, which is an injective $R$-module and therefore $\Ext^1_R(M,H^1_{\bf m}(R))=0$, so that $H_{\bf m}^{2}(M, R)=0$.
 
\end{remark}

We close this subsection presenting conditions under which the bound ${\rm cd}_I (M,N) \leq p+n$ is sharp. We remark that the proposition remains valid if we replace $I$-weak cofiniteness with $I$-cofiniteness.

\begin{proposition}\label{coh formula new}
Let $R$ be a local ring and suppose $M,N$ is a pair of $I$-cohomologically injective finite $R$-modules with ${\rm pd}_R M = p < \infty$, and consider an integer $n\geq \cd_IN$.
If there is an integer $k\in  \{1, \ldots, p+n-1\}$ such that $H^i_I (M,N)$ is $I$-weakly cofinite for all $i \in \{1, \ldots, p+n-1\} \setminus \{k\}$ and $H^k_I (M,N)$ is not $I$-weakly cofinite, then $n= \cd_IN$ and $${\rm cd}_I (M,N)= p+n.$$
\end{proposition}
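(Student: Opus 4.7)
My plan is to reduce the statement to an application of Theorem \ref{generalizedweaklycofinite} by first collecting enough vanishing information to trap the ``bad'' index $k$.

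First I would invoke Remark \ref{non-sharp-cd}: since $M$ is finite with $\pd_R M = p < \infty$ and $N$ is finite, we have $H^i_I(M,N)=0$ for all $i > p + \cd_I N$. Because $n \geq \cd_I N$ by hypothesis, this gives $H^i_I(M,N)=0$ for every $i \geq p+n+1$, and these zero modules are trivially $I$-weakly cofinite. At the other extreme, $H^0_I(M,N) \cong H^0_I(\Hom_R(M,N))$ embeds into $\Hom_R(M,N)$, which is $I$-weakly cofinite since $M$ is finite and $N$ is weakly Laskerian (being finite). Combining these two observations with the hypothesis that $H^i_I(M,N)$ is $I$-weakly cofinite for every $i \in \{1,\dots,p+n-1\} \setminus \{k\}$, the only indices at which the $I$-weak cofiniteness is still in question are $i = k$ and $i = p+n$.

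Next I would apply Theorem \ref{generalizedweaklycofinite} (which we may since the pair $M,N$ is assumed $I$-cohomologically injective). If $H^{p+n}_I(M,N)$ were $I$-weakly cofinite, then the theorem, applied with $s = k$, would force $H^k_I(M,N)$ to be $I$-weakly cofinite too, contradicting our hypothesis. Hence $H^{p+n}_I(M,N)$ is not $I$-weakly cofinite, and in particular $H^{p+n}_I(M,N) \neq 0$, so that $\cd_I(M,N) \geq p+n$.

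Finally, the upper bound from Remark \ref{non-sharp-cd} reads $\cd_I(M,N) \leq p + \cd_I N \leq p + n$, so both inequalities must be equalities. This yields simultaneously $\cd_I N = n$ and $\cd_I(M,N) = p+n$, completing the argument. There is no real obstacle: the whole content is the observation that the zero-cohomology and the top-cohomology slots can be treated ``for free'' (the former by the embedding into $\Hom_R(M,N)$, the latter by the sharp vanishing statement in Remark \ref{non-sharp-cd}), so that only two indices remain where non-$I$-weak cofiniteness could occur, and Theorem \ref{generalizedweaklycofinite} bridges them.
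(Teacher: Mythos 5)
Your proposal is correct and follows essentially the same route as the paper: the paper likewise notes $\cd_I(M,N)\le p+\cd_I N\le p+n$ and argues that if $H^{p+n}_I(M,N)$ vanished, Theorem \ref{generalizedweaklycofinite} would force $H^k_I(M,N)$ to be $I$-weakly cofinite, a contradiction. You merely make explicit the bookkeeping (the $i=0$ slot and the vanishing above $p+n$) that the paper leaves implicit.
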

\begin{proof} Since ${\rm cd}_I (M,N) \leq p+\cd_IN \leq p+n$, it suffices to show that $H_I^{p+n}(M,N)\neq 0$. Suppose otherwise.  Then Theorem \ref{generalizedweaklycofinite} implies that $H^k_I (M,N)$ is $I$-weakly cofinite, which contradicts the assumption. 
\end{proof}


\subsection{Gorenstein projective dimension and weak cofiniteness} We begin by considering suitable numbers (attached to a given pair of modules) which will play a crucial role in this subsection.

\begin{definition}\label{def-gpd}\rm (\cite{DAH}) Let $M,N$ be two finite $R$-modules. The {\it Gorenstein projective dimension of $M$ relative to $N$} is given by
$$\Gpd_NM=\sup \{i \geq 0 \mid {\rm Ext}^i_R(M,N) \neq 0\}.$$
\end{definition}

Here it is worth recalling that if $M, N$ are  finite $R$-modules and $\pd_RM<\infty$ then $\Gpd_NM=\pd_RM$ (see \cite[Lemma 2.2(iv)]{DAH}).

\begin{notation}\label{delta}\rm (\cite{DAH}, \cite{DAH1}) Let $M,N$ be two finite $R$-modules. We set
$$\rho(M, N) = \dim M \otimes_R N + \Gpd_NM.$$
\end{notation}

In \cite[Theorem 2.7]{DAH1}, the $I$-weak cofiniteness of the module $H^i_{I} (M,N)$ was established when $\rho(M, N) \leq 2$. Our main goal in this part is to address this problem in the cases $\rho(M, N)=3$ and $\rho(M, N)=4$.

\begin{theorem}\label{gpdfinite} Suppose $R$ is local and $M, N$ is a pair of $I$-cohomologically injective finite $R$-modules. The following assertions hold:
\begin{itemize}
    \item [\rm(i)] If $\rho(M, N)\leq3$, then $H^i_I(M,N)$ is $I$-weakly cofinite for all $i\geq0$. In particular, $\Ass_R H^i_{I} (M,N)$ is finite for all $i\geq 0$;
    \item [\rm(ii)] If $\rho(M, N)=4$, then $H^1_I(M,N)$ is $I$-weakly cofinite if and only if $H^2_I(M,N)$ is $I$-weakly cofinite. Moreover, $\Ass_RH^i_{I} (M,N)$ is finite for all $i \neq 2$;
    \item[\rm(iii)] If $\rho(M, N)=4$, $R$ is local, and $\dim \Hom_R (M,N) \leq 3$, then $H^i_I(M,N)$ is $I$-weakly cofinite for all $i\geq0$. In particular, $\Ass_R H^i_I(M,N)$ is finite for all $i\geq 0$.
\end{itemize}
\end{theorem}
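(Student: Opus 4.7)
The plan is to run the Grothendieck-type spectral sequence of Lemma \ref{ss}(i),
\[
E_2^{p,q}=H^p_I(\Ext^q_R(M,N))\Rightarrow_p H^{p+q}_I(M,N),
\]
alongside the auxiliary one from the proof of Theorem \ref{generalizedweaklycofinite},
\[
\widetilde{E}_2^{p,q}=\Ext^p_R(R/I,H^q_I(M,N))\Rightarrow_p \Ext^{p+q}_R(M/IM,N),
\]
which is available because $M,N$ is $I$-cohomologically injective. Writing $d:=\dim M\otimes_RN$ and $g:=\Gpd_NM$, since $\Supp_R\Ext^q_R(M,N)\subseteq \Supp_RM\cap\Supp_RN$ one has $\dim\Ext^q_R(M,N)\leq d$, and $\Ext^q_R(M,N)=0$ for $q>g$; thus the first $E_2^{p,q}$ vanishes outside $[0,d]\times[0,g]$ and $H^i_I(M,N)=0$ for $i>\rho(M,N)=d+g$. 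Three facts will be used repeatedly: (a) $H^0_I(M,N)\hookrightarrow\Hom_R(M,N)$ is finite and hence $I$-weakly cofinite; (b) the top cohomology $H^{\rho}_I(M,N)=E_2^{d,g}=H^d_I(\Ext^g_R(M,N))$ is $I$-cofinite by Melkersson's theorem on top local cohomology of a finite module; (c) by Proposition \ref{fivetermsexactsequence}, $H^1_I(M,N)$ is $I$-weakly cofinite iff $H^1_I(\Hom_R(M,N))$ is.

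For part (i), since $\dim\Hom_R(M,N)\leq d\leq 3$, \cite[Corollary 5.3]{CGH} gives $H^1_I(\Hom_R(M,N))$ $I$-weakly cofinite, hence $H^1_I(M,N)$ is by (c). Combined with (a), (b) and the vanishing of $H^i_I(M,N)$ for $i\geq 4$, we have $I$-weak cofiniteness for every $i\neq 2$, and Theorem \ref{generalizedweaklycofinite} then supplies the missing $H^2_I(M,N)$.

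For part (ii), the central step is to establish $I$-weak cofiniteness of $H^3_I(M,N)$. From the filtration on $H^3_I(M,N)$ induced by the first spectral sequence, only the pieces $E_\infty^{d-1,g}=E_2^{d-1,g}=H^{d-1}_I(\Ext^g_R(M,N))$ and $E_\infty^{d,g-1}$, a quotient of the top $E_2^{d,g-1}=H^d_I(\Ext^{g-1}_R(M,N))$, can contribute. Going case by case through $(d,g)\in\{(0,4),(1,3),(2,2),(3,1),(4,0)\}$, I would confirm each contributing subquotient is $I$-weakly cofinite using \cite[Corollary 5.3]{CGH} when the ambient module has dimension at most $3$, and Melkersson's result for the top contribution. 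Once $H^0, H^3, H^4$ are $I$-weakly cofinite and $H^i_I(M,N)=0$ for $i\geq 5$, Corollary \ref{ss+1} with $s=1$, $k=2$ produces the iff $H^1_I(M,N)\Leftrightarrow H^2_I(M,N)$. The finiteness of $\Ass_R H^i_I(M,N)$ for $i\neq 2$ is then immediate: $H^0, H^3, H^4$ have finite $\Ass$ (being $I$-weakly cofinite), while $\Ass_R H^1_I(M,N)$ is finite by Proposition \ref{prop Ass H^1(M,N)}.

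Part (iii) follows directly from (ii): under the extra hypothesis $\dim\Hom_R(M,N)\leq 3$, \cite[Corollary 5.3]{CGH} forces $H^1_I(\Hom_R(M,N))$ $I$-weakly cofinite, hence so is $H^1_I(M,N)$ by (c), and the iff from (ii) propagates this to $H^2_I(M,N)$; combined with $H^3, H^4$ from (ii), all $H^i_I(M,N)$ are $I$-weakly cofinite. The main obstacle is verifying $I$-weak cofiniteness of $H^3_I(M,N)$ in the edge case $(d,g)=(4,0)$ of part (ii), where $H^3_I(M,N)\cong H^3_I(\Hom_R(M,N))$ is the third ordinary local cohomology of a finite module of dimension $4$; here \cite[Corollary 5.3]{CGH} does not apply directly and one must leverage the $I$-cohomological injectivity of $M,N$ through the auxiliary spectral sequence $\widetilde{E}_2^{p,q}$ more subtly.
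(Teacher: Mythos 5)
Your overall strategy is close in spirit to the paper's, but there is one genuine gap, and you have identified it yourself without closing it: in part (ii) you never prove that $H^3_I(M,N)$ is $I$-weakly cofinite in the case $(d,g)=(\dim M\otimes_RN,\,\Gpd_NM)=(4,0)$, where the module in question is $H^3_I(\Hom_R(M,N))$ with $\dim\Hom_R(M,N)$ possibly equal to $4$, so that \cite[Corollary 5.3]{CGH} is unavailable. Since $(4,0)$ is precisely the situation $M=R$, $\dim N=4$ underlying Corollary \ref{dim4-0}, this is not a peripheral edge case but the heart of the statement; deferring it to an unspecified ``more subtle'' use of the auxiliary spectral sequence $\widetilde{E}$ leaves the proof incomplete. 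The paper closes this step in one stroke by invoking \cite[Theorem 2.5]{DAH}: since $\Gpd_NM+\cd_I(M\otimes_RN)\leq\rho(M,N)$, the modules $H^{\rho}_I(M,N)$ and $H^{\rho-1}_I(M,N)$ have \emph{finite support}, hence are weakly Laskerian and therefore $I$-weakly cofinite. That single citation replaces your entire case-by-case analysis of the $E_2$-page contributions to $H^3$ and, in particular, covers $(4,0)$.

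The remainder of your argument is sound and in places takes a mildly different route. In (i) the paper first reduces to $\rho=3$ via \cite[Theorem 2.7]{DAH1}, gets $H^2$ and $H^3$ from the finite-support statement of \cite[Theorem 2.5]{DAH}, and then recovers $H^1$ from Theorem \ref{generalizedweaklycofinite}; you instead establish $H^1$ (via $\dim\Hom_R(M,N)\le\dim M\otimes_RN\le3$, \cite[Corollary 5.3]{CGH} and Proposition \ref{fivetermsexactsequence}) together with the Artinian top module, and then recover $H^2$ — both are valid. Your treatment of (iii) coincides with the paper's, though as written it inherits the dependence on the unproved $H^3$, $H^4$ claims of (ii) (in (iii) the extra hypothesis $\dim\Hom_R(M,N)\le3$ happens to rescue the $(4,0)$ case directly). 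To repair the proposal, replace the analysis of $E_\infty^{d-1,g}$ and $E_\infty^{d,g-1}$ by the citation of \cite[Theorem 2.5]{DAH}.
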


\begin{proof}
 (i) Set $\rho = \rho(M, N)$. Note that $\cd_IM\otimes_RN\leq \dim M \otimes_R N$, which gives 
    $$\Gpd_NM+\cd_IM\otimes_RN \leq \rho \leq 3.$$ Thus, by \cite[Theorem 2.5]{DAH}, we get $H^i_I(M,N)=0$ for all $i>3$ and, in addition, the $R$-modules $H^{\rho}_I(M,N)$ and $H^{{\rho}-1}_I(M,N)$ have finite support. As already mentioned, the case $\rho \leq 2$ is well-known by \cite[Theorem 2.7]{DAH1}, so we can assume $\rho =3$. Now, \cite[Lemma 2.2(iii)]{DAM} ensures that the $R$-modules $H^{3}_I(M,N)$ and $H^{2}_I(M,N)$ are weakly Laskerian, hence $I$-weakly cofinite. Because $H^0_I(M,N)$ is always $I$-weakly cofinite, our Theorem \ref{generalizedweaklycofinite} forces $H^1_I(M,N)$ to have the same property.
    This proves the assertion.
    
    \medskip
    
(ii) In analogy to the proof of part (i) above, we obtain that $H^i_I(M,N)=0$ for all $i>4$ and that both $H^4_I(M,N)$ and $H^3_I(M,N)$ are $I$-weakly cofinite. Now, using again that $H^0_I(M,N)$ is $I$-weakly cofinite, the result follows by Corollary \ref{ss+1} along with Proposition \ref{prop Ass H^1(M,N)}.

    \medskip
    
    (iii) Applying \cite[Corollary 5.3]{CGH} to the pair $R, \Hom_R (M,N)$, we deduce that the $R$-module $H^i_I(\Hom_R(M,N))$ is $I$-weakly cofinite for all $i\geq0$. By Proposition \ref{fivetermsexactsequence} we get that $H^1_I(M,N)$ is $I$-weakly cofinite. Now the result follows from  (ii) and its proof.
\end{proof}


	
	 
	 \begin{remark}\rm (i) Concerning Theorem \ref{gpdfinite}(iii), it might be of interest to understand the limit  situation where both $\rho(M, N)$ and $\dim \Hom_R (M,N)$ are equal to $4$ (cf.\,also Theorem \ref{dim+gpd=4} and its proof).
	 
	 \medskip
	 
	  \noindent (ii) By the proof of the theorem we can see that, if $R$ is local and $M, N$ are finite $R$-modules with $\rho(M, N)<\infty$, then $H^i_I(M, N)=0$ for all $i>\rho(M, N)$; this follows alternatively from \cite[Corollary 2.2]{FJMS}, which in addition gives an isomorphism $$H^{\rho(M, N)}_I(M, N)\cong H_I^{\dim M\otimes_RN}({\rm Ext}_R^{{\rm Gpd}_NM}(M, N)),$$ and notice {\it en passant} that, by \cite[Theorem 2.5]{DAH}, these modules must be Artinian. Now we observe that  $H^{\rho(M, N)}_I(M, N)=0$ if $\dim {\rm Ext}_R^{{\rm Gpd}_NM}(M, N)<\dim M\otimes_RN$ (recall that $\dim  {\rm Ext}_R^i(M, N)\leq \dim M\otimes_RN$ for all $i$). 
	 
	 \end{remark}
	 
	 By Theorem \ref{gpdfinite}(ii) and its proof, we immediately derive the following new fact in ordinary local cohomology theory. The result will be further complemented in Corollary \ref{corollary four dimensional}.
	 
	 \begin{corollary}\label{dim4-0}
    Suppose $R$ is local and $N$ is finite with $\dim N=4$. The  following conditions are equivalent:
    \begin{itemize}
        \item[\rm (i)] $H^i_I (N)$ is $I$-weakly cofinite for all $i$;
        \item[\rm (ii)] $H^1_I (N)$ is $I$-weakly cofinite;
        \item[\rm (iii)] $H^2_I (N)$ is $I$-weakly cofinite.
    \end{itemize}
\end{corollary}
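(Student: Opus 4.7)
The plan is to specialize Theorem \ref{gpdfinite}(ii) to the case $M = R$, where everything collapses to ordinary local cohomology. First I would verify the two standing hypotheses of that theorem for the pair $R, N$. The pair is automatically $I$-cohomologically injective by Remark \ref{ex-pairs}(ii), since $R$ is finite flat. And $\rho(R, N) = \dim R \otimes_R N + \mathrm{Gpd}_N R = \dim N + 0 = 4$, using $\Ext^i_R(R, -) = 0$ for $i > 0$ to conclude $\mathrm{Gpd}_N R = 0$.

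With these in hand, Theorem \ref{gpdfinite}(ii) applied to the pair $R, N$ gives immediately that $H^1_I(R, N) = H^1_I(N)$ is $I$-weakly cofinite if and only if $H^2_I(R, N) = H^2_I(N)$ is, which is the equivalence (ii) $\Leftrightarrow$ (iii). The implication (i) $\Rightarrow$ (ii) is obvious.

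For (ii) $\Rightarrow$ (i), I would reuse the bookkeeping inside the proof of Theorem \ref{gpdfinite}(ii): with $M = R$ and $\rho(R, N) = 4$, the cited results of \cite{DAH} yield $H^i_I(N) = 0$ for all $i > 4$, and both $H^3_I(N)$ and $H^4_I(N)$ have finite support, hence are weakly Laskerian and therefore $I$-weakly cofinite. In addition, $H^0_I(N) \subset N$ is finite, so $I$-weakly cofinite. Thus if (ii) holds, the already established equivalence (ii) $\Leftrightarrow$ (iii) supplies the $I$-weak cofiniteness of $H^2_I(N)$, and combining with the three automatic cases $i = 0, 3, 4$ gives (i).

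There is no real obstacle: the corollary is essentially a bookkeeping specialization of Theorem \ref{gpdfinite}(ii), and the only thing to be careful about is verifying that the auxiliary modules $H^0_I(N)$, $H^3_I(N)$, $H^4_I(N)$ are indeed $I$-weakly cofinite so that the biconditional on the middle degrees extends to the full list in (i).
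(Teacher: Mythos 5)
Your proposal is correct and is exactly the argument the paper intends: the corollary is stated as an immediate consequence of Theorem \ref{gpdfinite}(ii) ``and its proof,'' which is precisely the specialization to $M=R$ (with the $I$-cohomological injectivity of the pair $R,N$ from Remark \ref{ex-pairs}(ii) and $\rho(R,N)=\dim N=4$) plus the bookkeeping that $H^0_I(N)$, $H^3_I(N)$, $H^4_I(N)$ are automatically $I$-weakly cofinite. No further comment is needed.
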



	 Next, we record an auxiliary lemma.
	 

	\begin{lemma}\label{corollary ker f coker f} Let $f$ be an $R$-linear map between two $R$-modules. If $\Ext^i_R (R/I, \ker f)$ and $\Ext^i_R (R/I, \coker f)$ are weakly Laskerian for all $i\geq 0$, then the modules $\ker \Ext^i_R (R/I,f)$ and $\coker \Ext^i_R (R/I,f)$ are also weakly Laskerian for all $i\geq 0$.
	\end{lemma}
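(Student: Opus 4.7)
The natural strategy is to factor $f$ through its image. Write $f=j\circ p$, where $p\colon A\twoheadrightarrow \im f$ is the canonical surjection and $j\colon\im f\hookrightarrow B$ is the inclusion, so that $A$ and $B$ denote the source and target of $f$. Then, by functoriality, $\Ext^i_R(R/I,f)=\Ext^i_R(R/I,j)\circ\Ext^i_R(R/I,p)$ for every $i\geq 0$.

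Apply $\Ext^\bullet_R(R/I,-)$ to the two short exact sequences
$$0\longrightarrow \ker f \longrightarrow A \stackrel{p}{\longrightarrow}\im f\longrightarrow 0, \qquad 0\longrightarrow \im f\stackrel{j}{\longrightarrow} B\longrightarrow \coker f\longrightarrow 0.$$
The resulting long exact sequences identify $\ker\Ext^i_R(R/I,p)$ as a quotient of $\Ext^i_R(R/I,\ker f)$ and $\coker\Ext^i_R(R/I,p)$ as a submodule of $\Ext^{i+1}_R(R/I,\ker f)$; similarly, $\ker\Ext^i_R(R/I,j)$ is a quotient of $\Ext^{i-1}_R(R/I,\coker f)$ and $\coker\Ext^i_R(R/I,j)$ is a submodule of $\Ext^i_R(R/I,\coker f)$. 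Since the class of weakly Laskerian modules is closed under passing to subquotients (Remark \ref{obss}(iv)), each of these four kernels and cokernels is weakly Laskerian for every $i\geq 0$.

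The final step is the standard ker-coker exact sequence for a composition of two maps: given homomorphisms $u\colon X\to Y$ and $v\colon Y\to Z$, one has the six-term exact sequence
$$0\to\ker u\to\ker(vu)\to\ker v\to\coker u\to\coker(vu)\to\coker v\to 0,$$
which is immediate from the snake lemma. Applying this with $u=\Ext^i_R(R/I,p)$ and $v=\Ext^i_R(R/I,j)$ inserts $\ker\Ext^i_R(R/I,f)$ and $\coker\Ext^i_R(R/I,f)$ between weakly Laskerian modules in a six-term exact sequence. Invoking Remark \ref{obss}(iv) once more (closure under extensions and subquotients) yields the desired weak Laskerianess of both modules. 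There is no real obstacle; the only point requiring a touch of care is isolating $\ker\Ext^i_R(R/I,f)$ and $\coker\Ext^i_R(R/I,f)$ in the six-term sequence by breaking it into two short exact sequences and applying Remark \ref{obss}(iv) to each.
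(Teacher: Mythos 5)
Your argument is correct. All four auxiliary kernels and cokernels are identified properly from the two long exact sequences (note that for $i=0$ one has $\ker\Hom_R(R/I,j)=0$ outright, consistent with your convention $\Ext^{-1}=0$), the six-term kernel--cokernel sequence for a composite is standard and applies since $\Ext^i_R(R/I,f)=\Ext^i_R(R/I,j)\circ\Ext^i_R(R/I,p)$ by functoriality, and the closure properties of weakly Laskerian modules you invoke are exactly the Serre-subcategory properties recorded in Remark \ref{obss}(iv). The paper, however, does not argue this way explicitly: it simply observes that the weakly Laskerian modules form a Serre subcategory (citing \cite[Lemma 2.2]{DAM}) and then quotes \cite[Corollary 3.2]{Mel}, which is a general statement of precisely this lemma for an arbitrary Serre subcategory $\mathcal{S}$. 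So your proof is, in effect, a self-contained unwinding of Melkersson's result: it costs half a page of diagram chasing but makes the lemma independent of that reference, whereas the paper's two-line proof buys brevity at the price of outsourcing the factorization-through-the-image argument. Either is acceptable; yours has the merit of making transparent exactly which closure properties (subobjects, quotients, extensions) are used and where.
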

	\begin{proof} The category of all weakly Laskerian $R$-modules is known to be a Serre subcategory (see \cite[Lemma 2.2]{DAM}).
	Now the lemma is readily obtained by letting $\mathcal{S}$ be such a category in \cite[Corollary 3.2]{Mel}.
	\end{proof}

	The result below provides a criterion for $I$-weak cofiniteness analogous to the criterion for $I$-cofiniteness discussed in \cite[Corollary 3.4]{Mel}. It will be a crucial tool in the proof of Theorem \ref{dim+gpd=4}.
	
\begin{corollary}\label{corollary melkersson weakly cofinite}
	Let $H$ be an $R$-module with $\Supp_RH\subset V(I)$. Suppose there exists $x \in I$ such that $0:_H x$ and $H/xH$ are $I$-weakly cofinite. Then, $H$ is $I$-weakly cofinite.
	\end{corollary}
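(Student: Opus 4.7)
The plan is to exploit multiplication by $x$ on $H$ as the $R$-linear map to which Lemma \ref{corollary ker f coker f} will be applied. Concretely, I would set $\mu_x: H \to H$ to be the multiplication-by-$x$ homomorphism, and observe that
$$\ker \mu_x = 0:_H x \quad \text{and} \quad \coker \mu_x = H/xH.$$
Both of these are $I$-weakly cofinite by hypothesis, so in particular the modules $\Ext^i_R(R/I, 0:_H x)$ and $\Ext^i_R(R/I, H/xH)$ are weakly Laskerian for every $i \geq 0$.

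Next, I would invoke Lemma \ref{corollary ker f coker f} with $f = \mu_x$. This yields that $\ker \Ext^i_R(R/I, \mu_x)$ and $\coker \Ext^i_R(R/I, \mu_x)$ are weakly Laskerian for all $i \geq 0$. The crucial point is then to identify the induced map: functoriality of $\Ext$ shows that $\Ext^i_R(R/I, \mu_x)$ is nothing but multiplication by $x$ on $\Ext^i_R(R/I, H)$, and since $x \in I$ annihilates $R/I$, this map is identically zero. Consequently, both its kernel and cokernel coincide with $\Ext^i_R(R/I, H)$ itself, which is therefore weakly Laskerian for every $i \geq 0$.

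Combined with the standing assumption $\Supp_R H \subset V(I)$, this exactly gives that $H$ is $I$-weakly cofinite, as required. I do not anticipate any serious obstacle: the argument is a clean Serre-subcategory style reduction, and the only subtlety worth flagging is the standard identification $\Ext^i_R(R/I, \mu_x) = x \cdot \mathrm{id}$, which relies on the naturality of $\Ext$ in the second variable and the fact that $I$ annihilates $R/I$. This mirrors Melkersson's classical criterion for $I$-cofiniteness, with the weakly Laskerian Serre subcategory playing the role previously played by finite modules.
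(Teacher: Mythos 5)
Your proof is correct and follows essentially the same route as the paper: both apply Lemma \ref{corollary ker f coker f} to the multiplication-by-$x$ map on $H$ and use that $\Ext^i_R(R/I,x\cdot 1_H)$ is the zero map because $x\in I$, so that its kernel (and cokernel) recover $\Ext^i_R(R/I,H)$.
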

\begin{proof}
	Set $f=x 1_H\in {\rm Hom}_R(H, H)$. By assumption, the $R$-modules $\Ext^i_R(R/I, \ker f)$ and $\Ext^i_R (R/I, \coker f)$ are weakly Laskerian for all $i\geq 0$. Since $x\in I$ and the map $\Ext^i_R (R/I,f)$ is multiplication by $x$ on $\Ext^i_R (R/I, H)$, it must be the zero map, i.e., $$\ker \Ext^i_R (R/I,f) = \Ext^i_R (R/I, H).$$ Now, using Lemma \ref{corollary ker f coker f}, we deduce that $\Ext^i_R (R/I, H)$ is weakly Laskerian for all $i$. Since in addition $\Supp_RH\subset V(I)$, we conclude that $H$ is $I$-weakly cofinite.
	\end{proof}


We observe the following result.
	
\begin{proposition}\label{topcohomology}	
If $N$ is weakly Laskerian of dimension $d$, then $\Supp_RH^d_I (N)$ is finite. Consequently, $H^d_I (N)$ is weakly Laskerian. In particular, $H^d_I(N)$ is $I$-weakly cofinite.
	\end{proposition}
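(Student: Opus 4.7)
The plan is to bound $\Supp_R H^d_I(N)$ by a finite subset of primes determined by the (finite) set $\Ass_R N$ and the ideal $I$. Once finiteness of the support is established, the remaining conclusions are automatic: Remark \ref{obss}(i) gives that $H^d_I(N)$ is weakly Laskerian, and then Remark \ref{obss}(ii) together with the standard inclusion $\Supp_R H^d_I(N)\subseteq V(I)$ upgrades this to $I$-weak cofiniteness.

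First I would write $N=\varinjlim_\alpha M_\alpha$ as the filtered colimit of its finite submodules. Since $H^d_I(-)$ commutes with direct limits, $H^d_I(N)=\varinjlim_\alpha H^d_I(M_\alpha)$, hence
$$\Supp_R H^d_I(N)\,\subseteq\,\bigcup_\alpha\Supp_R H^d_I(M_\alpha).$$
Grothendieck's vanishing theorem kills $H^d_I(M_\alpha)$ whenever $\dim M_\alpha<d$, so only finite submodules of maximal dimension contribute. For such an $M\subseteq N$ with $\dim M=d$ and any $\mathfrak{p}\in\Supp_R H^d_I(M)$, flat base change yields $H^d_{IR_\mathfrak{p}}(M_\mathfrak{p})\neq 0$, forcing $\dim M_\mathfrak{p}=d$. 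Picking $\mathfrak{q}\in\Ass_R M$ with $\mathfrak{q}\subseteq\mathfrak{p}$ and $\height(\mathfrak{p}/\mathfrak{q})=d$, the chain inequality $\dim R/\mathfrak{q}\geq\height(\mathfrak{p}/\mathfrak{q})+\dim R/\mathfrak{p}$ (combined with $\dim R/\mathfrak{q}\leq\dim M=d$) forces $\dim R/\mathfrak{p}=0$ and $\dim R/\mathfrak{q}=d$. Finally, the classical non-vanishing criterion for top local cohomology of a finite module over a local ring (see, e.g., \cite{Brodman}) applied to $M_\mathfrak{p}$ over $R_\mathfrak{p}$ gives $\sqrt{(I+\mathfrak{q})R_\mathfrak{p}}=\mathfrak{p} R_\mathfrak{p}$, i.e., $\mathfrak{p}\in\Min\bigl(R/(I+\mathfrak{q})\bigr)$.

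Combining, and using $\Ass_R M\subseteq\Ass_R N$ with the latter finite (by the weak Laskerian hypothesis applied with $U=0$ in Definition \ref{defWL}(i)), while each set $\Min\bigl(R/(I+\mathfrak{p})\bigr)$ is finite in the Noetherian ring $R$, one concludes
$$\Supp_R H^d_I(N)\,\subseteq\,\bigcup_{\mathfrak{p}\in\Ass_R N,\,\dim R/\mathfrak{p}=d}\Min\bigl(R/(I+\mathfrak{p})\bigr),$$
a finite set. The main technical obstacle is the support characterization for a finite module whose dimension equals $d$: it rests on the classical local non-vanishing criterion for top local cohomology. Once that standard tool is granted, the direct-limit argument together with the finiteness of $\Ass_R N$ is entirely routine and delivers the proposition.
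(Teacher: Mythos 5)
Your reduction to finite submodules, and the observations that every $\mathfrak{p}\in\Supp_RH^d_I(N)$ contains $I$, is maximal, and lies above some $\mathfrak{q}\in\Ass_RN$ with $\dim R/\mathfrak{q}=d$, are all correct --- but none of that yet bounds the support by a finite set, and the step that is supposed to do so fails. The ``classical non-vanishing criterion for top local cohomology'' is the Lichtenbaum--Hartshorne theorem (and its module version), and it is formulated over the \emph{completion} $\widehat{R_{\mathfrak{p}}}$: non-vanishing of $H^d_{IR_{\mathfrak{p}}}(M_{\mathfrak{p}})$ produces a prime $\mathfrak{P}\in\Ass_{\widehat{R_{\mathfrak{p}}}}\widehat{M_{\mathfrak{p}}}$ with $\dim \widehat{R_{\mathfrak{p}}}/(I\widehat{R_{\mathfrak{p}}}+\mathfrak{P})=0$, and this does \emph{not} descend to $\sqrt{(I+\mathfrak{q})R_{\mathfrak{p}}}=\mathfrak{p}R_{\mathfrak{p}}$ for a prime $\mathfrak{q}$ of $R$ itself (only the useless inequality $\dim \widehat{R_{\mathfrak{p}}}/(I+\mathfrak{q})\widehat{R_{\mathfrak{p}}}\geq 0$ survives the contraction). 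Concretely, there are local domains $(A,\mathfrak{m})$ of dimension $d$ with reducible completion for which $H^d_I(A)\neq0$ yet $\dim A/I>0$; taking $N=M=A$ and $\mathfrak{q}=0$, your claimed inclusion $\Supp_RH^d_I(N)\subseteq\bigcup_{\mathfrak{q}}\Min\bigl(R/(I+\mathfrak{q})\bigr)$ fails, since $\mathfrak{m}$ belongs to the left-hand side but is not minimal over $I$. A further warning sign is that your argument uses only the finiteness of $\Ass_RN$, not the full weakly Laskerian hypothesis.

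The paper sidesteps all of this with a different decomposition: by Bahmanpour's structure theorem, a weakly Laskerian module $N$ admits a single \emph{finite} submodule $N_1$ with $\Supp_RN/N_1$ finite. Then $H^d_I(N_1)$ is Artinian when $\dim N_1=d$ (top local cohomology of a finite module, \cite[Exercise 7.1.7]{Brodman}) and hence has finite support, $\Supp_RH^d_I(N/N_1)\subseteq\Supp_RN/N_1$ is finite, and the long exact sequence gives the result. If you want to retain a colimit-style argument, you still need a uniform finite bound on $\bigcup_\alpha\Supp_RH^d_I(M_\alpha)$ over \emph{all} finite submodules; that is exactly what Bahmanpour's theorem provides and what the Lichtenbaum--Hartshorne route, as you have used it, does not.
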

	\begin{proof}
In view of \cite[Theorem 3.3]{Bah0}, there exists a finite submodule $N_1$ of $N$ such that $\Supp_RN/N_1$ is finite. The short exact sequence
$\xymatrix@=1em{
0\ar[r] & N_1\ar[r] & N\ar[r] & N/N_1\ar[r] & 0
}$ induces an exact sequence
$$\xymatrix@=1em{
H^d_I (N_1)\ar[r] & H^d_I (N)\ar[r] & H^d_I (N/N_1)\ar[r] & 0.
}$$ Note that $\Supp_RH^d_I (N/N_1)$ is finite.
On the other hand, $\dim N_1 \leq d$. If $\dim N_1 < d$, then $H^d_I (N_1)=0$ and the claim follows. Otherwise, by using \cite[Exercise 7.1.7]{Brodman} we deduce that $H^d_I (N_1)$ is Artinian and hence $\Supp_RH^d_I (N_1)$ is finite. Now the finiteness of $\Supp_RH^d_I (N)$ is an immediate consequence of the above exact sequence. By Remark \ref{obss}(i), the module $H^d_I (N)$ is weakly Laskerian and hence, as clearly $\Supp_RH^d_I (N)\subset V(I)$, it must be $I$-weakly cofinite (see Remark \ref{obss}(ii)). \end{proof}
	
Now we  generalize \cite[Corollary 5.2 and Corollary 5.1]{CGH} respectively as follows.

\begin{corollary} Suppose $N$ is weakly Laskerian. Then the following assertions hold:
	\begin{itemize} 
	\item [\rm (i)] If $\dim  N\leq2$, then $H^i_I(N)$ is $I$-weakly cofinite for all $i\geq0$;
	
	\item [\rm (ii)] If $\dim  R\leq2$ and $M$ is finite, then $H^i_I(M,N)$ is $I$-weakly cofinite for all $i\geq0$.
	\end{itemize}
	\end{corollary}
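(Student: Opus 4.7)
For part (i), the plan is to reduce to results already established in the paper, splitting by the index $i$. The cases $i=0$ and $i\geq 3$ are immediate: $H^0_I(N)\subset N$ is a submodule of the weakly Laskerian module $N$ with support in $V(I)$, hence $I$-weakly cofinite by Remark \ref{obss}(i)--(ii), while $H^i_I(N)=0$ for $i\geq 3$ by Grothendieck vanishing. For $i=2$, Proposition \ref{topcohomology} gives the $I$-weak cofiniteness of $H^2_I(N)=H^{\dim N}_I(N)$ when $\dim N=2$ (otherwise $H^2_I(N)=0$). For $i=1$: if $\cd_I N\leq 1$ then Corollary \ref{cdordinary}(i) yields the claim (or else $H^1_I(N)=0$), while if $\cd_I N=2$ then Corollary \ref{cdordinary}(ii) combined with the $I$-weak cofiniteness of $H^2_I(N)$ just established transfers the property to $H^1_I(N)$.

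For part (ii), I will use the spectral sequence of Lemma \ref{ss}(i),
\[ E_2^{p,q}=H^p_I(\Ext^q_R(M,N))\Longrightarrow H^{p+q}_I(M,N). \]
Since $\dim R\leq 2$ we have $\cd I\leq 2$, so $E_2^{p,q}=0$ for $p\geq 3$ and the spectral sequence is confined to the columns $p\in\{0,1,2\}$. Each $\Ext^q_R(M,N)$ is weakly Laskerian (Remark \ref{obss}(i)) of dimension at most $\dim R\leq 2$, so part (i) is applicable to it. The key mixed analysis of the $E_2$-page is this: $E_2^{0,q}$ embeds into the weakly Laskerian module $\Ext^q_R(M,N)$ and hence is weakly Laskerian; $E_2^{2,q}$ is the top local cohomology $H^{\dim}_I$ of a weakly Laskerian module of dimension at most $2$ (and it vanishes unless that dimension equals $2$), so it is weakly Laskerian by Proposition \ref{topcohomology}; while $E_2^{1,q}$ is $I$-weakly cofinite by part (i).

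The main obstacle is that the class of $I$-weakly cofinite modules is not known to be closed under arbitrary subobjects or quotients---only under extensions and the two-of-three form of Remark \ref{obss}(iv). I will bypass this by inspecting the spectral sequence differentials directly: since only $d_2$ is nontrivial on the three live columns, one obtains $E_\infty^{0,q}=\ker(d_2^{0,q})\subset E_2^{0,q}$, $E_\infty^{1,q}=E_2^{1,q}$, and $E_\infty^{2,q}$ as a quotient of $E_2^{2,q}$. Therefore $E_\infty^{0,n}$ and $E_\infty^{2,n-2}$ are weakly Laskerian (submodule and quotient, respectively, of weakly Laskerian modules) with support in $V(I)$, hence $I$-weakly cofinite by Remark \ref{obss}(ii); and $E_\infty^{1,n-1}$ is $I$-weakly cofinite. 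Finally, $H^n_I(M,N)$ admits a finite filtration whose three successive quotients are the $E_\infty^{p,n-p}$ with $p\in\{0,1,2\}$, and two applications of Remark \ref{obss}(iv) deliver the required $I$-weak cofiniteness.
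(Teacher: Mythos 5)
Your proposal is correct and follows essentially the same route as the paper: part (i) via Corollary \ref{cdordinary} together with Proposition \ref{topcohomology}, and part (ii) via the three-column spectral sequence of Lemma \ref{ss}(i). Your treatment of the passage from $E_2$ to $E_\infty$ is in fact slightly more careful than the paper's: since $I$-weakly cofinite modules are not known to be closed under submodules and quotients, your observation that the columns $p=0$ and $p=2$ consist of \emph{weakly Laskerian} modules (so that $E_\infty^{0,q}$ and $E_\infty^{2,q}$ inherit weak Laskerianness, and hence $I$-weak cofiniteness from their support condition) is exactly the point needed to justify the conclusion.
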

	\begin{proof} Part (i) follows easily by Corollary \ref{cdordinary} and Proposition \ref{topcohomology}. To prove (ii), consider the spectral sequence $E$ of Lemma \ref{ss}(i). Using item (i) above, $E_2^{p,q}$ is $I$-weakly cofinite for all $p,q\geq0$. From the isomorphism $E_2^{1,i}\cong E_\infty^{1,i}$ and the exact sequence
	$$\xymatrix@=1em{
	0\ar[r] & E_\infty^{0,j}\ar[r] & E_2^{0,j}\ar[r] & E_2^{2,j-1}\ar[r] & E_\infty^{2,j-1}\ar[r] & 0}$$
	for every $i,j\geq0$, we conclude, by breaking this sequence into two short exact sequences, that $E_\infty^{p,q}$ is $I$-weakly cofinite for all $p,q\geq0$. The result now follows by convergence.\end{proof}
	
	The next statement, which is concerned with ordinary local cohomology, is new and will be a key tool in the proof of Theorem \ref{dim+gpd=4}.
	
	\begin{theorem}\label{criteria dim < 2}
	 Suppose $R$ is local and $\dim N \leq 2$. The following conditions are equivalent:
	\begin{itemize}
	    \item[(\rm i)] $H^i_{I} (N)$ is $I$-weakly cofinite for all $i \geq 0$;
	    \item[(\rm ii)] $\Ext^i_R (R/I, N)$ is weakly Laskerian for all $i \geq 0$;
	    \item[(\rm iii)] $\Ext^i_R (R/I, N)$ is weakly Laskerian for $i=0,1$.
	\end{itemize}
	\end{theorem}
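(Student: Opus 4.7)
The implication (ii) $\Rightarrow$ (iii) is trivial. For (i) $\Rightarrow$ (ii), I would apply the spectral sequence of Lemma \ref{ss}(ii) with $M = R/I$; since $\Supp R/I \subset V(I)$, the warm-up Proposition of this section gives $H^{p+q}_I(R/I, N) \cong \Ext^{p+q}_R(R/I, N)$, so
$$E_2^{p,q} = \Ext^p_R(R/I, H^q_I(N)) \Rightarrow_p \Ext^{p+q}_R(R/I, N).$$
As $\dim N \leq 2$ forces $H^q_I(N) = 0$ for $q \geq 3$, each $\Ext^n_R(R/I, N)$ admits a finite filtration whose successive quotients are subquotients of the $E_2^{p,q}$. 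Under (i), each $E_2^{p,q}$ is weakly Laskerian, so the Serre subcategory property (Remark \ref{obss}(iv)) yields (ii).

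The content is (iii) $\Rightarrow$ (i), which I would prove by induction on $d = \dim N$. The base case $d = 0$ is immediate: $N$ is $\mathfrak{m}$-torsion (since $R$ is local and $\Supp N \subset \{\mathfrak{m}\}$) and hence $I$-torsion (as $I \subset \mathfrak{m}$), so $H^i_I(N) = 0$ for $i \geq 1$ and $H^0_I(N) = N$; each $\Ext^i_R(R/I, N)$ is $\mathfrak{m}$-torsion, so weakly Laskerian, so $N$ is $I$-weakly cofinite.

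For the inductive step $1 \leq d \leq 2$, I would first handle $\Gamma_I(N) = H^0_I(N)$. The short exact sequence $0 \to \Gamma_I(N) \to N \to \bar N \to 0$ with $\bar N = N/\Gamma_I(N)$ (so that $\Hom_R(R/I, \bar N) = 0 :_{\bar N} I \subset \Gamma_I(\bar N) = 0$) yields, via the $\Ext$ long exact sequence, $\Hom_R(R/I, \Gamma_I(N)) \cong \Hom_R(R/I, N)$ and an embedding $\Ext^1_R(R/I, \Gamma_I(N)) \hookrightarrow \Ext^1_R(R/I, N)$, so $\Gamma_I(N)$ inherits the $\Hom$/$\Ext^1$ weak Laskerianness of (iii). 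A Kawasaki--Melkersson-style sub-lemma for $I$-torsion modules (proved by filtering $\Gamma_I(N) = \bigcup_n (0 :_{\Gamma_I(N)} I^n)$ and iterating Corollary \ref{corollary melkersson weakly cofinite}, exploiting $\dim \Gamma_I(N) \leq 2$) then delivers the $I$-weak cofiniteness of $\Gamma_I(N)$. Having handled $H^0_I(N)$, we may replace $N$ by $\bar N$ and assume $\Gamma_I(N) = 0$. Next, select an $N$-regular element $x \in I$; since $x$ annihilates $R/I$, multiplication by $x$ is zero on each $\Ext^i_R(R/I, -)$, so the $\Ext$ long exact sequence arising from $0 \to N \xrightarrow{x} N \to N/xN \to 0$ breaks into short exact sequences
$$0 \to \Ext^i_R(R/I, N) \to \Ext^i_R(R/I, N/xN) \to \Ext^{i+1}_R(R/I, N) \to 0, \qquad i \geq 0.$$
Bootstrapping the weak Laskerianness of $\Ext^2_R(R/I, N)$ (obtainable from the already-established $I$-weak cofiniteness of $\Gamma_I(N)$ combined with the first spectral sequence of Lemma \ref{ss}(ii) in low degrees) transfers (iii) to $N/xN$, whose dimension is $\leq d-1$. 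The inductive hypothesis then gives the $I$-weak cofiniteness of each $H^j_I(N/xN)$, and the local cohomology long exact sequence expresses $H^i_I(N)/xH^i_I(N)$ as a submodule and $0 :_{H^{i+1}_I(N)} x$ as a quotient of $H^i_I(N/xN)$; Corollary \ref{corollary melkersson weakly cofinite} then yields (i).

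The main obstacle I anticipate is the Kawasaki--Melkersson-style sub-lemma for the $I$-torsion module $\Gamma_I(N)$: namely, showing that weak Laskerianness of $\Hom_R(R/I, \Gamma_I(N))$ and $\Ext^1_R(R/I, \Gamma_I(N))$, together with $\dim \Gamma_I(N) \leq 2$, implies $I$-weak cofiniteness. A secondary delicacy is producing the $N$-regular element $x \in I$ without assuming that $N$ is weakly Laskerian, since $\Ass_R N$ could a priori be infinite; one must combine (iii) with an avoidance argument (using that the non-zero-divisors on $N$ in $I$ correspond to $I \not\subset \bigcup \Ass_R N$, which is guaranteed by $\Gamma_I(N) = 0$).
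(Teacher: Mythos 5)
Your treatment of (ii)$\Rightarrow$(iii) and (i)$\Rightarrow$(ii) is fine (the latter is exactly Melkersson's argument, which the paper invokes as \cite[Proposition 3.9]{Mel} with $\mathcal{S}$ the weakly Laskerian modules). The problem is (iii)$\Rightarrow$(i), where your inductive scheme has three unresolved gaps, two of which you flag yourself, and none of which is a routine matter. First, your ``Kawasaki--Melkersson-style sub-lemma'' for $\Gamma_I(N)$ is not an auxiliary step: for an $I$-torsion module $W$ of dimension $\le 2$ one has $H^0_I(W)=W$ and $H^i_I(W)=0$ for $i>0$, so the sub-lemma \emph{is} the implication (iii)$\Rightarrow$(i) for such $W$, i.e.\ essentially the full content of the theorem; ``filtering by $0:_W I^n$ and iterating Corollary \ref{corollary melkersson weakly cofinite}'' does not produce the required element $x\in I$ with $0:_Wx$ and $W/xW$ weakly cofinite, since every element of $I$ acts locally nilpotently on $W$. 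Second, the theorem places no finiteness or weak Laskerian hypothesis on $N$ (and is later applied to $\im\psi$, a quotient of a local cohomology module), so $\Ass_R\bar N$ may be infinite and even uncountable; $\Gamma_I(\bar N)=0$ only says $I\not\subset P$ for each individual $P\in\Ass_R\bar N$, which does not yield $I\not\subset\bigcup_{P\in\Ass_R\bar N}P$, so the $\bar N$-regular element in $I$ need not exist. Third, transferring (iii) to $N/xN$ via your short exact sequences requires $\Ext^2_R(R/I,N)$ to be weakly Laskerian; your proposed bootstrap through the spectral sequence $\Ext^p_R(R/I,H^q_I(N))\Rightarrow\Ext^{p+q}_R(R/I,N)$ needs $\Ext^1_R(R/I,H^1_I(N))$ and $\Hom_R(R/I,H^2_I(N))$ weakly Laskerian, which is part of the conclusion (i) you are trying to prove, so the argument is circular.

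For contrast, the paper's proof of (iii)$\Rightarrow$(i) avoids regular elements entirely. It first reduces to $R$ complete (flat base change plus \cite[Lemma 2.1]{Marley}), then argues by contradiction: if some $\Ext^i_R(R/I,H^j_I(N))$ had a quotient with infinitely many associated primes $\{P_t\}$, one localizes at $S=R\setminus\bigcup_tP_t$; the countable prime avoidance lemma \cite[Lemma 3.2]{MV} (valid over complete local rings) guarantees $\mathbf{m}\cap S\neq\emptyset$, so $\dim S^{-1}N\le 1$, and the known dimension-one criterion \cite[Proposition 3.4]{RM} applied over $S^{-1}R$ gives a contradiction. In other words, the dimension drop is achieved by localization away from countably many primes, not by factoring out a regular element, and the genuinely hard low-dimensional case is delegated to an existing result. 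As it stands your proposal does not constitute a proof.
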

	\begin{proof}
	Taking $\mathcal{S}$ as the category of weakly Laskerian $R$-modules in \cite[Proposition 3.9]{Mel}, we see that (i) implies (ii).
	It is clear that (ii) implies (iii). 
	Now, assume (iii) and let us prove that (i) holds. 
	
	Write ${\bf m}$ for the maximal ideal of $R$. First, by the flat base change property (see \cite[Theorem 4.3.2]{Brodman}) together with \cite[Lemma 2.1]{Marley}, we may assume that $R$ is ${\bf m}$-adically   complete. Given two non-negative integers $i,j$, let us consider the module $$T=\Ext^i_R (R/I, H^j_I (N)).$$ For any given $R$-submodule $L$ of $T$, we need to deduce that $\Ass_R T/L$ is finite. Suppose, by way of contradiction, that this set is infinite for some submodule $L$. So, we can choose  a countably infinite subset $$\displaystyle\ \{P_t\}_{t=1}^{\infty}\subset \Ass_R T/L, \quad P_t \neq {\bf m} \quad \mbox{for \,each} \quad t.$$ Consider the multiplicative closed subset $S = R\setminus \bigcup_{t=1}^{\infty} P_t$. Then, for every $t$, we have $S^{-1}P_t \in \Ass_{S^{-1}R} S^{-1}T/S^{-1}L$. Notice, in particular, that  $\Ass_{S^{-1}R} S^{-1}T/S^{-1}L$ is infinite. On the other hand, \cite[Lemma 3.2]{MV} (where $R$ is required to be complete) ensures that ${\bf m}\nsubseteq \bigcup_{t=1}^{\infty} P_t$, i.e,  ${\bf m}$ meets $S$, which (as ${\rm dim}\,N\leq 2$) forces $$\dim  S^{-1}N \leq 1.$$ Now, since the $S^{-1}R$-module $\Ext^i_{S^{-1}R} (S^{-1}R/S^{-1}I, S^{-1}N)$ is  weakly Laskerian for $i=0,1$, we can apply \cite[Proposition 3.4]{RM} in order to obtain that $H^j_{S^{-1}I} (S^{-1}N)$ is $S^{-1}I$-weakly cofinite for all $j \geq 0$. It follows that   $S^{-1}T$ is a weakly Laskerian $S^{-1}R$-module and hence so is the quotient $S^{-1}T/S^{-1}L$. In particular, $\Ass_{S^{-1}R} S^{-1}T/S^{-1}L$ is finite, which is a contradiction.
	\end{proof}
	
	\begin{question}\rm A problem which is now natural (and whose technical difficulties we have been unable to circumvent) is to decide, for a local ring $R$ and  $R$-modules $M, N$ with $M$ finite and $\dim  N \leq 2$, the equivalence of the following assertions:
	\begin{itemize}
	    \item[(\rm i)] $H^i_{I} (M, N)$ is $I$-weakly cofinite for all $i \geq 0$;
	    \item[(\rm ii)] $\Ext^i_R (M/IM, N)$ is weakly Laskerian for all $i \geq 0$;
	    \item[(\rm iii)] $\Ext^i_R (M/IM, N)$ is weakly Laskerian for $i=0,1$.
	\end{itemize}
	\end{question}

	
	
	
	
	

Now let us state the main result of this section.

\begin{theorem}\label{dim+gpd=4} Suppose $R$ is local and $M, N$ is an $I$-cohomologically injective pair of finite $R$-modules, with $\rho(M, N)=4$ $($see Notation \ref{delta}$)$.
If $\Hom_R (R/ I, H^2_{I} (\Hom_R (M,N)))$ is weakly Laskerian, then the $R$-module $H^i_{I} (M,N)$ is $I$-weakly cofinite for all $i\geq 0$. In particular, $\Ass_R H^i_{I} (M,N)$ is finite for all $i\geq 0$.
\end{theorem}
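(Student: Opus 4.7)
The plan is first to reduce the conclusion to a single claim about $H^2_I(L)$ for $L=\Hom_R(M,N)$ of dimension $4$, and then to verify this claim by a Melkersson-type criterion whose harder half exploits the hypothesis through a contradiction-localization argument in the spirit of Theorem \ref{criteria dim < 2}.

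For the reduction, Theorem \ref{gpdfinite}(ii) together with Corollary \ref{ss+1} imply that it suffices to show $H^2_I(M,N)$ is $I$-weakly cofinite, since $H^0_I(M,N)$, $H^3_I(M,N)$ and $H^4_I(M,N)$ are already $I$-weakly cofinite and $H^i_I(M,N)=0$ for $i>4$. If $\Gpd_N M\ge 1$, then $\dim\Hom_R(M,N)\le \dim M\otimes_R N=4-\Gpd_N M\le 3$, and Theorem \ref{gpdfinite}(iii) closes the case. I may therefore assume $\Gpd_N M=0$, whereupon the spectral sequence of Lemma \ref{ss}(i) collapses to $H^i_I(M,N)\cong H^i_I(L)$ with $L:=\Hom_R(M,N)$. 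If $\dim L\le 3$, Theorem \ref{gpdfinite}(iii) again applies, so I assume $\dim L=4$ and, by flat base change, that $R$ is ${\bf m}$-adically complete.

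Next I would apply Corollary \ref{corollary melkersson weakly cofinite} to $H^2_I(L)$ with $x\in I$ chosen to be a nonzero divisor on $L_1:=L/\Gamma_I(L)$ (available by prime avoidance). Setting $L'':=L_1/xL_1$, we have $\dim L''\le 3$, so \cite[Corollary 5.3]{CGH} gives that each $H^i_I(L'')$ is $I$-weakly cofinite. The long exact sequence attached to $0\to L_1\xrightarrow{\,x\,} L_1\to L''\to 0$, combined with $H^i_I(L)\cong H^i_I(L_1)$ for $i\ge 1$, yields for each $i\ge 1$ a short exact sequence
\[
0\to H^i_I(L)/xH^i_I(L)\to H^i_I(L'')\to 0:_{H^{i+1}_I(L)} x\to 0.
\]
Taking $i=2$, the module $0:_{H^3_I(L)} x$ is a submodule of $H^3_I(L)=H^3_I(M,N)$, which has finite support by \cite[Theorem 2.5]{DAH} (exactly as used in the proof of Theorem \ref{gpdfinite}(ii)), hence is weakly Laskerian by Remark \ref{obss}(i) and consequently $I$-weakly cofinite by Remark \ref{obss}(ii); the Serre property from Remark \ref{obss}(iv) then forces $H^2_I(L)/xH^2_I(L)$ to be $I$-weakly cofinite.

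It remains to verify that $0:_{H^2_I(L)} x$ is $I$-weakly cofinite, and here the hypothesis is decisive. By adjunction (using $x\in I$) we have $\Hom_R(R/I,\,0:_{H^2_I(L)} x)\cong \Hom_R(R/I,H^2_I(L))$, which is weakly Laskerian by assumption; what is still needed is the weak Laskerianness of each $\Ext^i_R(R/I,0:_{H^2_I(L)} x)$ for $i\ge 1$. I would handle this by the contradiction-localization strategy of Theorem \ref{criteria dim < 2}: a failure produces a countable infinite family $\{P_t\}$ of non-maximal associated primes of some Ext-quotient, and setting $S=R\setminus\bigcup P_t$ one invokes \cite[Lemma 3.2]{MV} together with completeness of $R$ to ensure ${\bf m}\cap S\ne\emptyset$, forcing $\dim S^{-1}L\le 3$. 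Applying \cite[Corollary 5.3]{CGH} (localizing at each maximal ideal of $S^{-1}R$ if necessary) yields that $S^{-1}H^2_I(L)\cong H^2_{S^{-1}I}(S^{-1}L)$ is $S^{-1}I$-weakly cofinite, and through the flat base-change identity $S^{-1}(0:_{H^2_I(L)} x)=0:_{H^2_{S^{-1}I}(S^{-1}L)} x$ one obtains weak Laskerianness of the localized Ext module, contradicting the infinite family $\{S^{-1}P_t\}$ of distinct associated primes. Once this is settled, Corollary \ref{corollary melkersson weakly cofinite} delivers that $H^2_I(L)\cong H^2_I(M,N)$ is $I$-weakly cofinite, and Corollary \ref{ss+1} transports the property to $H^1_I(M,N)$. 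The main obstacle will be exactly this last step: delicately bridging the tension between the global character of weak Laskerianness and the local applicability of \cite[Corollary 5.3]{CGH}, which will likely require a further localization at each maximal ideal of $S^{-1}R$ followed by a gluing argument, or an alternative refinement of the Melkersson step modeled on the cokernel half already handled above.
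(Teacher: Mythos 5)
Your reduction (to $\Gpd_NM=0$, $T=\Hom_R(M,N)$ of dimension $4$, $H^i_I(M,N)\cong H^i_I(T)$, and the single module $H^2_I(T)$) and your treatment of the cokernel half of the Melkersson-type criterion (Corollary \ref{corollary melkersson weakly cofinite}) coincide with the paper's argument. The gap is exactly where you suspect it: the kernel half. Your plan to prove that $\Ext^i_R(R/I,0:_{H^2_I(T)}x)$ is weakly Laskerian for $i\geq 1$ by the contradiction--localization device of Theorem \ref{criteria dim < 2} does not go through, for two reasons. First, after inverting $S=R\setminus\bigcup_t P_t$ you need a dimension-$\leq 3$ weak-cofiniteness statement over $S^{-1}R$, but \cite[Corollary 5.3]{CGH} is a local-ring result and $S^{-1}R$ is in general not local (it need not even be semilocal). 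Second, the fallback of localizing at each maximal ideal of $S^{-1}R$ and ``gluing'' cannot work in principle: weak Laskerianness is the finiteness of $\Ass$ of every quotient, and finiteness of a set of associated primes is not a property that can be verified locally at maximal ideals and then assembled. This is precisely why the localization trick in Theorem \ref{criteria dim < 2} is confined to $\dim\leq 2$, where the post-localization input is a dimension-$\leq 1$ criterion that holds without a local hypothesis.

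The paper circumvents this by never running the localization argument on $0:_{H^2_I(T)}x$ itself. Writing the long exact sequence of $0\to T\xrightarrow{x}T\to T/xT\to 0$ as
$$\xymatrix@=1em{H^1_I(T)\ar[r] & H^1_I(T)\ar[r]^-{\psi} & H^1_I(T/xT)\ar[r] & H^2_I(T)\ar[r]^-{\varphi} & H^2_I(T),}$$
one gets a short exact sequence $0\to\im\psi\to H^1_I(T/xT)\to\ker\varphi\to 0$ exhibiting $\ker\varphi=0:_{H^2_I(T)}x$ as a quotient of the $I$-weakly cofinite module $H^1_I(T/xT)$, which by \cite[Theorem 1.1(ii)]{SM} has dimension at most $2$. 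One then checks that $\Ext^j_R(R/I,\im\psi)$ is weakly Laskerian for $j=0,1$ only: the case $j=0$ because $\Hom_R(R/I,\im\psi)$ embeds into $\Hom_R(R/I,H^1_I(T/xT))$, and the case $j=1$ from the connecting map out of $\Hom_R(R/I,\ker\varphi)\subset\Hom_R(R/I,H^2_I(T))$, which is where the hypothesis of the theorem enters. Theorem \ref{criteria dim < 2} (legitimately applicable since $\dim\im\psi\leq 2$) then upgrades this to full $I$-weak cofiniteness of $\im\psi$, and the Serre property of Remark \ref{obss}(iv) transfers it to $\ker\varphi$. If you replace your final step by this two-term dévissage, the rest of your argument is sound.
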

\begin{proof} Set $T= \Hom_R (M,N)$. We can write $\dim  T \leq\dim  M\otimes_RN\leq \rho(M, N)=4$. By Theorem \ref{gpdfinite}(iii), we can assume $\dim  T=4$. Therefore, we get
$$\dim   T=\dim  M\otimes_RN=4 \quad
\mbox{and} \quad \Gpd_NM=0.$$ Note the latter equality means ${\rm Ext}_R^j(M, N)=0$ for all $i\geq 1$.
Now, in particular, the spectral sequence of Lemma \ref{ss}(i) ensures that $$H^i_I(M,N)\cong H^i_I(T) \quad \mbox{for \,all} \quad i\geq 0.$$ Recall that, by the proof of Theorem \ref{gpdfinite}(ii), $H^i_I(M,N)=0$ for all $i>4$ and in addition the modules $H^4_I(M,N)\cong H^4_I(T)$ and $H^3_I(M,N)\cong H^3_I(T)$ are both $I$-weakly cofinite (also recall $H^0_I(M,N)$ always has this property). It thus suffices to prove the $I$-weak cofiniteness of $H^2_{I} (T)$. This will be accomplished by means of Corollary \ref{corollary melkersson weakly cofinite}.
        
    First, since $H^2_I(T)\cong H^2_I(T/H^0_{I}(T))$, we may suppose that $\depth_IT>0$ and choose a $T$-regular element $x \in I$. This gives an exact sequence
    $$\xymatrix@=1em{
     H^1_{I} (T) \ar[r]& H^1_{I} (T) \ar[r]^{\psi}& H^1_{I} (T/xT) \ar[r]& H^2_{I} (T) \ar[r]^{\varphi}& H^2_{I} (T) \ar[r]& H^2_{I} (T/xT) \ar[r]& H^3_{I} (T),}
    $$
    where $\varphi$ is  multiplication by $x$ on $H^2_{I} (T)$.
    As $\dim  T/xT =3$, we can apply \cite[Corollary 5.3]{CGH} to the pair $R, T/xT$ in order to obtain that the ordinary local cohomology modules $H^i_{I} (T/xT)$ are $I$-weakly cofinite for all $i\geq 0$. Since moreover $H^3_{I} (T)$ is $I$-weakly cofinite, the module $\coker \varphi = H^2_I(T)/xH^2_I(T)$ must have the same property. Also note the submodule $\Hom_R (R/I, \ker \varphi)\subset \Hom_R (R/I, H_I^2(T))$ is weakly Laskerian.


Now, let us consider the short exact sequence $$\xymatrix@=1em{
0\ar[r] & \im \psi\ar[r] & H^1_{I} (T/xT)\ar[r] & \ker \varphi\ar[r] & 0.
}$$ In particular, $\Hom_R (R/I, \im \psi)\subset \Hom_R (R/I, H_I^1(T/xT))$ is weakly Laskerian, and by means of the induced exact sequence  $$\xymatrix@=1em{
\Hom_R (R/I, H^1_{I} (T/xT))\ar[r] & \Hom_R (R/I, \ker \varphi)\ar[r] & \Ext^1_R (R/I, \im \psi)}$$ we obtain that $\Ext^1_R (R/I, \im \psi)$ is weakly Laskerian as well.
Therefore, we have shown that $\Ext^j_R (R/I, \im \psi)$ is weakly Laskerian for $j=0,1$. On the other hand, using \cite[Theorem 1.1(ii)]{SM} we get $\dim  H^1_{I} (T/xT) \leq \dim  T/xT -1 = 2$. It follows by Theorem \ref{criteria dim < 2} that $\im \psi$ is $I$-weakly cofinite, which forces the same property on the module $\ker \varphi = 0:_{H^2_I(T)}x$. Applying Corollary \ref{corollary melkersson weakly cofinite}, we conclude that $H^2_I(T)$ is $I$-weakly cofinite, as needed.
\end{proof}



\begin{remark}\label{dimhom4remark}\rm (i) The isomorphisms $H^i_I(M,N)\cong H^i_I(\Hom_R(M,N))$ for all $i\geq0$, which appear in the proof of Theorem \ref{dim+gpd=4}, hold in a wide range of situations. Suppose, for example, that $R$ is local, $\depth_RM=\depth R$, and $N$ has finite injective dimension. Then the well-known Ischebeck's formula (see, e.g., \cite[Exercise 3.1.24]{BH}) yields $\Gpd_NM=0$. Now the desired isomorphisms follow by convergence in the spectral sequence of Lemma \ref{ss}(i).

\medskip

\noindent (ii) By the very first part of the proof of Theorem \ref{dim+gpd=4} above, along with Theorem \ref{gpdfinite}(iii) and \cite[Lemma 2.2(iv)]{DAH}, we deduce the following fact. Suppose $R$ is local and $N$ is a finite $R$-module. If there exists a non-free finite $R$-module $M$ with $\pd_RM<\infty$ such that the pair $M, N$ is $I$-cohomologically injective with $\rho(M, N)=4$, then $H^i_{I} (M,N)$ is $I$-weakly cofinite (in particular, $\Ass_R H^i_{I} (M,N)$ is finite) for all $i\geq 0$.

\end{remark}

For ordinary local cohomology, we derive the following result which complements Corollary \ref{dim4-0}.


\begin{corollary}\label{corollary four dimensional}
Suppose $R$ is local and $N$ is finite with $\dim N=4$. The following conditions are equivalent:
\begin{itemize}
    \item[\rm (i)] $H^i_I (N)$ is $I$-weakly cofinite for all $i\geq 0$;
     \item[\rm (ii)] $H^i_I (H_I^2(N))$ is $I$-weakly cofinite for all $i\geq 0$;
    
    \item[\rm (iii)] $\Hom_R (R/I, H^2_I (N))$ is weakly Laskerian.
\end{itemize}
\end{corollary}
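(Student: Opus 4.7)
My plan is to reduce the three conditions to a single central statement, namely that $H^2_I(N)$ is $I$-weakly cofinite, and then close the loop by invoking Theorem \ref{dim+gpd=4} with $M=R$. Corollary \ref{dim4-0} already supplies the equivalence of (i) with this central condition, so the remaining work is to connect it to (ii) and (iii).

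For (ii), the key observation is that every local cohomology module is $I$-torsion; in particular, applied to $H^2_I(N)$, this yields $H^j_I(H^2_I(N))=0$ for all $j>0$ while $H^0_I(H^2_I(N))=H^2_I(N)$ (which one verifies via an $I$-torsion injective resolution, since $\Gamma_I$ acts as the identity on such a resolution). Hence (ii) is literally the single assertion that $H^2_I(N)$ is $I$-weakly cofinite, and is therefore equivalent to (i) by Corollary \ref{dim4-0}. The implication (i)$\Rightarrow$(iii) is likewise immediate: if $H^2_I(N)$ is $I$-weakly cofinite then $\Hom_R(R/I,H^2_I(N))$ is weakly Laskerian by the very definition of $I$-weak cofiniteness.

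The only implication that requires real machinery is (iii)$\Rightarrow$(i), and here I would apply Theorem \ref{dim+gpd=4} directly with $M=R$. The pair $R,N$ is $I$-cohomologically injective by Remark \ref{ex-pairs}(ii), since $R$ is finite and flat over itself. Moreover $\Gpd_N R = 0$, as $\Ext^i_R(R,N)=0$ for all $i\geq 1$, so $\rho(R,N) = \dim(R\otimes_R N) + 0 = \dim N = 4$. Finally, $\Hom_R(R,N)\cong N$, so the hypothesis of Theorem \ref{dim+gpd=4} (the weak Laskerianness of $\Hom_R(R/I, H^2_I(\Hom_R(M,N)))$) reduces precisely to (iii). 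Consequently $H^i_I(N) \cong H^i_I(R,N)$ is $I$-weakly cofinite for all $i\geq 0$, which is (i). The interest of the corollary is therefore conceptual: conditions (ii) and (iii), which a priori look much weaker than (i), are equivalent to it, and no genuine obstacle arises beyond recognizing that Theorem \ref{dim+gpd=4} is tailored to this setting.
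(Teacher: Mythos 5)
Your proof is correct, and the decisive implication (iii)$\Rightarrow$(i) is handled exactly as in the paper, via Theorem \ref{dim+gpd=4} with $M=R$ (the verifications that the pair $R,N$ is $I$-cohomologically injective, that $\Gpd_NR=0$ so $\rho(R,N)=4$, and that $\Hom_R(R,N)\cong N$ are the same). Where you genuinely diverge is in tying condition (ii) to the central statement that $H^2_I(N)$ is $I$-weakly cofinite. The paper first bounds $\dim H^2_I(N)\leq 2$ using \cite[Theorem 1.1(ii)]{SM} and then invokes Theorem \ref{criteria dim < 2}, concluding that (ii) is equivalent to the weak Laskerianness of $\Ext^j_R(R/I,H^2_I(N))$ for all $j$, i.e.\ to the $I$-weak cofiniteness of $H^2_I(N)$. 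You instead use the elementary fact that $H^2_I(N)$ is $I$-torsion, so that $H^0_I(H^2_I(N))\cong H^2_I(N)$ and $H^j_I(H^2_I(N))=0$ for all $j>0$; condition (ii) then collapses to the same central statement with no machinery at all. Your route is shorter and bypasses both the dimension bound and Theorem \ref{criteria dim < 2}; the paper's route, while heavier, has the side benefit of exhibiting (ii) as further equivalent to the weak Laskerianness of just the two modules $\Ext^j_R(R/I,H^2_I(N))$ for $j=0,1$, which is what makes condition (iii) a natural member of the list. Your appeal to Corollary \ref{dim4-0} for (i)$\Leftrightarrow$(ii) is valid, though strictly only the trivial direction is needed (condition (i) contains the $I$-weak cofiniteness of $H^2_I(N)$ outright) once (iii)$\Rightarrow$(i) is in place.
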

\begin{proof} By virtue of \cite[Theorem 1.1(ii)]{SM}, we have $\dim H^2_I (N) \leq 2$. Then, 
Theorem \ref{criteria dim < 2} ensures that (ii) holds if and only if  $\Ext^j_R (R/I, H^2_I (N))$ is weakly Laskerian for all $j\geq 0$, i.e.,
$H^2_I (N)$ is $I$-weakly cofinite, which in turn implies condition (iii). Now, noticing that $$\rho(R, N)=\dim N+\Gpd_NR=\dim N=4$$ and applying Theorem \ref{dim+gpd=4} with $M=R$, we conclude that (iii) implies (i). \end{proof}


We close the section with an application of Theorem \ref{criteria dim < 2} concerning the category $\mathcal{C}^{2} (R, I)_{\rm wcof}$ of all $I$-weakly cofinite $R$-modules of dimension at most 2. Our result extends \cite[Theorem 3.5]{BAH}, which considered the category $\mathcal{C}^{1}(R, I)_{\rm wcof}$.
	
	\begin{corollary}\label{theorem abelian category}
    If $R$ is local then $\mathcal{C}^{2} (R, I)_{\rm wcof}$ is an Abelian category.
    \end{corollary}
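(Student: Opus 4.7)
The plan is to verify the standard criterion that a full subcategory of an Abelian category (here, $R$-Mod) inherits an Abelian structure, namely, closure under the formation of kernels and cokernels (the zero object and finite direct sums trivially remain in $\mathcal{C}^{2}(R, I)_{\rm wcof}$). Concretely, given an $R$-linear map $f\colon M\to N$ between objects of $\mathcal{C}^{2}(R, I)_{\rm wcof}$, I must show that $K=\ker f$, $L=\im f$, and $C=\coker f$ all lie in $\mathcal{C}^{2}(R, I)_{\rm wcof}$. The bounds on dimension and support are immediate, since these modules are subquotients of $M$ or $N$, so the only nontrivial task is to verify $I$-weak cofiniteness.

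The decisive tool is Theorem \ref{criteria dim < 2}: for a module $W$ with $\Supp_R W\subset V(I)$ and $\dim W\leq 2$, $I$-weak cofiniteness is equivalent to $\Ext^i_R(R/I, W)$ being weakly Laskerian for just the two indices $i=0,1$. This two-term criterion is what makes the argument tractable, because the weakly Laskerian $R$-modules form a Serre subcategory. My plan is to process $K$, $L$, $C$ in that order, chasing the long exact sequences of $\Ext^\bullet_R(R/I, -)$ attached to $0\to K\to M\to L\to 0$ and $0\to L\to N\to C\to 0$. Both $\Hom_R(R/I, K)$ and $\Hom_R(R/I, L)$ embed into the weakly Laskerian modules $\Hom_R(R/I, M)$ and $\Hom_R(R/I, N)$, respectively. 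Next, the four-term fragment
\[
\Hom_R(R/I, M)\to \Hom_R(R/I, L)\to \Ext^1_R(R/I, K)\to \Ext^1_R(R/I, M)
\]
sandwiches $\Ext^1_R(R/I, K)$ between weakly Laskerian modules, forcing it (by the Serre property) to be weakly Laskerian as well; hence Theorem \ref{criteria dim < 2} promotes $K$ to an $I$-weakly cofinite module. Consequently, all $\Ext^i_R(R/I, K)$ are weakly Laskerian, and the same long exact sequence at each index then gives the weak Laskerianess of every $\Ext^i_R(R/I, L)$, so $L\in\mathcal{C}^{2}(R, I)_{\rm wcof}$. Finally, in the long exact sequence associated with $0\to L\to N\to C\to 0$, the fragments surrounding $\Hom_R(R/I, C)$ and $\Ext^1_R(R/I, C)$ have all outer terms weakly Laskerian (namely $\Hom_R(R/I, N)$, $\Ext^1_R(R/I, L)$, $\Ext^1_R(R/I, N)$, $\Ext^2_R(R/I, L)$), so one last invocation of Theorem \ref{criteria dim < 2} handles $C$.

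The main obstacle has already been overcome in Theorem \ref{criteria dim < 2}; the present proof is essentially bookkeeping with the Serre property of weakly Laskerian modules. This approach parallels the dimension-$\leq 1$ argument of \cite[Theorem 3.5]{BAH}, with our two-dimensional criterion playing the role of theirs.
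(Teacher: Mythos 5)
Your proposal is correct and follows essentially the same route as the paper: reduce to closure under kernels, images, and cokernels, get the degree-$0$ Ext's from submodule embeddings, bootstrap the degree-$1$ Ext's through the long exact sequences, and invoke Theorem \ref{criteria dim < 2} (together with $\Supp\subset V(I)$, so that condition (i) there amounts to $I$-weak cofiniteness). The only cosmetic differences are which exact fragments you quote and that the paper handles $\coker f$ via the two-out-of-three property for $I$-weakly cofinite modules rather than a second application of the criterion.
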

    
    \begin{proof}
    Let $f:M \rightarrow N$ be an $R$-linear map, where $M,N$ are objects in $\mathcal{C}^{2} (R, I)_{\rm wcof}$. The class of all $R$-modules with dimension at most 2 is a Serre subcategory of the category of all $R$-modules. So, the modules $\ker f$, $\im f$ and $\coker f$ have dimension at most 2. Note the submodule
    $\Hom_R (R/I, \ker f) \subset \Hom_R (R/I, M)$ is weakly Laskerian. Similarly, $\Hom_R (R/I, \im f)$ has this property as well. On the other hand, the short exact sequence
    $\xymatrix@=1em{
0\ar[r] &  \ker f\ar[r] & M\ar[r] & \im f\ar[r] & 0
}$ yields the exact sequence
$$\xymatrix@=1em{
\Hom_R (R/I, \im f)\ar[r] & \Ext^1_R (R/I, \ker f)\ar[r] &\Ext^1_R (R/I, M),}$$ from which we get the weak Laskerianess of the module $\Ext^1_R (R/I, \ker f)$.
    Hence, by Theorem \ref{criteria dim < 2}, $\ker f$ is $I$-weakly cofinite, and so by the exact sequence
    $$\xymatrix@=1em{
\Ext^1_R (R/I, M)\ar[r] & \Ext^1_R (R/I, \im f)\ar[r] &\Ext^2_R (R/I,\ker f)}$$
we deduce that $\Ext^1_R (R/I, \im f)$ is weakly Laskerian. Consequently, applying again Theorem \ref{criteria dim < 2}, $\im f$ must be $I$-weakly cofinite. Finally, the fact that $\coker f$ is $I$-weakly cofinite follows from the exact sequence 
$\xymatrix@=1em{
0\ar[r] &  \im f\ar[r] & N\ar[r] & \coker f\ar[r] & 0.}$\end{proof}

\section{A pathological example}\label{patho}

In this last section we provide two examples which, in our view, are of interest in regard to the theory investigated in this note. They illustrate that, if $M$ is finite and $N$ is {\it not} weakly Laskerian (in particular, non-finite), then the set $\Ass_R H^1_{I}(M,N)$ can be infinite or finite. The latter instance, in particular, reveals a rather pathological behavior which, to the best of our knowledge, has not been noticed in the literature.
In both examples, $R=k[x_1,\ldots, x_n]$ denotes a polynomial ring in indeterminates $x_1, \ldots, x_n$ over an infinite field $k$.

\begin{example}\rm\label{example infinite}
 Assume $n\geq 3$ and let $I=(x_1)\subset R$. Consider the prime ideals $$P_t = (x_2,\ldots,x_{n-1},x_{n}-t) \quad \mbox{for \,each} \quad t\in k.$$ Set $M= R/(x_2)$ and  $N = \displaystyle\bigoplus_{t \in k} R/{P_t}$. First of all, notice that $N$ is not weakly Laskerian since $P_t \in \Ass_R N$ for all $t \in k$. Moreover, since $H^1_I (M,N) \cong \displaystyle\bigoplus_{t \in k} H^1_I (M,R/{P_t})$, we obtain $\Ass_R H^1_I (M,R/{P_t}) \subset \Ass_R H^1_I (M,N)$ for each $t \in k$. On the other hand, because $$\depth_{\ann_RM/IM}R/{P_t}=\depth_{(x_1, x_2)}R/{P_t}=\depth_{I}R/{P_t}=1,$$ we can apply \cite[Theorem 2.4]{CH} in order to write $$\Ass_R \Hom_R (R/I, H^1_I (M,R/P_t)) = \Ass_R \Ext^1_R (M/IM, R/P_t)$$ $$= \Ass_R \Ext^1_R (R/(x_1,x_2), R/P_t)=\Ass_RR/((x_1, x_2)+P_t).$$ We then derive that
$$\Ass_R H^1_I (M,R/P_t) = \Ass_R \Hom_R (R/I, H^1_I (M,R/P_t)) = \{(x_1,\ldots,x_{n-1},x_{n}-t)\},$$
which shows that $\Ass_R H^1_I (M,N)$ is infinite.
 \end{example}


 \begin{example}\rm\label{example finite} Assume $n\geq 2$. Fix $\lambda_{0} \in k^{\ast}=k\setminus \{0\}$ and let $I=(x_1-\lambda_{0})\subset R$. Set $$N = \left( \displaystyle\bigoplus_{\lambda \in k^{\ast}} {k[x_1]}_{x_1-\lambda}\right) \displaystyle \oplus R_{x_1}.$$ This is an $R$-module with $x_i$ acting trivially on ${k[x_1]}_{x_1-\lambda}$ for each $i=2, \ldots,n$. We shall see, in the steps below, that $N$ is not weakly Laskerian but, quite surprisingly, the set $\Ass_R H^1_I (M,N)$ is finite for any finite $R$-module $M$.
 
 \smallskip

 \noindent  {\bf Step 1}. $N$ is not weakly Laskerian.
 
\smallskip
 
 For each $\lambda \in k^{\ast}$, set $T^{\lambda} =  {k[x_1]}_{x_1-\lambda}$ and  ${\bf m}_{\lambda}=(x_1-\lambda, x_2, \ldots,x_n)\subset R$. Notice that 
 $${\bf m}_{\lambda} = \ann_R \overline{(x_1-\lambda)^{-1}} \in   \Ass_R {T^{\lambda}}/ k[x_1].$$
 On the other hand, by taking $L = \left( \displaystyle\bigoplus_{\lambda \in k^{\ast}} k[x_1]\right) \displaystyle\oplus R\subset N$, we obtain $$
 N/L \cong \left( \displaystyle\bigoplus_{\lambda \in k^*} T^{\lambda}/ k[x_1]\right) \displaystyle\oplus {R_{x_1}}/ R$$
 and hence ${\bf m}_{\lambda} \in \Ass_RN/L$, for all $\lambda \in k^{\ast}$. Therefore, $\Ass_RN/L$ is infinite. In particular, $N$ is not weakly Laskerian.


 \smallskip

 \noindent  {\bf Step 2}. $\Ass_R H^1_I (M,L)$ is finite.
 
\smallskip
 
The short exact sequence
$\xymatrix@=1em{
0\ar[r] &  L\ar[r] & N\ar[r] & N/L\ar[r] & 0}$ induces an exact sequence
 $$\xymatrix@=1em{H^0_I (M,N/L) \ar[r] & H^1_I (M,L)\ar[r] & H^1_I (M,N)\ar[r] &  H^1_I (M,N/L),}$$ which in turn yields the inclusion
 $$
 \Ass_R H^1_I (M,N) \subset \Ass_R H^1_I (M,L) \cup \Supp_R H^0_I (M,N/L) \cup \Supp_R H^1_I (M,N/L). 
 $$
Now notice that $H^1_I (M,L) \cong H^1_I \left(M,\displaystyle\bigoplus_{\lambda \in k^{\ast}} k[x_1]\right) \displaystyle\oplus H^1_I (M,R)$. By Proposition \ref{prop Ass H^1(M,N)}, the set $\Ass_R H^1_I (M,R)$ is finite. Furthermore, given $P \in \Ass_R H^1_I \left(M,\displaystyle\bigoplus_{\lambda \in k^{\ast}} k[x_1]\right) $, we have $I \subset P$ and $P \in \Supp_{R} k[x_1]$. Therefore $$x_2, \ldots, x_n \in 0:_{R}k[x_1]\subset P,$$ so that
 $(x_1-\lambda_0, x_2,\ldots,x_n)\subset P$ and hence $P={\bf m}_{{\lambda}_0}$, which gives $$\Ass_{R} H^1_I \left(M,\displaystyle\bigoplus_{\lambda \in k^{\ast}} k[x_1]\right) \subset \{{\bf m}_{{\lambda}_0}\}.$$ Thus we have shown that $\Ass_R H^1_I (M,L)$ is finite.

  \smallskip

 \noindent  {\bf Step 3}. $\Ass_R H^1_I (M, N)$ is finite.
 
\smallskip

It is clear that each element of the quotient $T^{\lambda}/k[x_1]$ is annihilated by $x_2, \ldots, x_n$ and by some power of $(x_1-\lambda)$. This shows that ${\bf m}_{\lambda} \subset Q$ for each $Q \in \Supp_R T^{\lambda}/k[x_1]$, i.e.,
 $$
 \Supp_R T^{\lambda}/k[x_1] = \{{\bf m}_{\lambda}\}.$$
 In a similar fashion, we see that $(x_1) \subset Q$ whenever $Q \in \Supp_R R_{x_1}/ R$.
 Now, given $P \in \Supp_R H^i_I (M,N/L)$, we must have $I \subset P$ and $P \in \Supp_RN/L$. Since
 $$
 \Supp_RN/L = \left(\displaystyle\bigcup_{\lambda \in k^{\ast}} \Supp_RT^{\lambda}/ k[x_1]\right) \displaystyle\cup \Supp_R R_{x_1} / R,$$
 we get that either $P={\bf m}_{\lambda}$ for some $\lambda \in k^{\ast}$, or
 $P \in \Supp_R R_{x_1} /R$.
 The latter case is impossible; indeed, supposing otherwise, we would have $(x_1)\subset P$ as seen above, whereas also $I=(x_1-{\lambda}_0) \subset P$, a contradiction as $\lambda_0\neq 0$. Thus, $P={\bf m}_{\lambda}$. Since again $I \subset P$, we necessarily have $P={\bf m}_{{\lambda}_0}$. Hence $$\Supp_R H^i_I (M,N/L)=\{{\bf m}_{{\lambda}_0}\},$$ which, using the previous step and its proof, guarantees that  $\Ass_R H^1_I (M,N)$ is finite, as claimed.

 For completeness, notice that by taking $M=R$ we obtain the same property for the set $\Ass_RH^1_I(N)$ (the module $H^1_I(N)$ is non-zero by the remark below).
 
 \end{example}

\begin{remark}\rm The goal of Example \ref{example finite} would be essentially meaningless if it could happen that $H^1_I (M,N)=0$ for  every finite $R$-module $M$, so now it is worth showing that this is {\it not} the case. Indeed, we claim that $H^1_I (M,N) \neq 0$ whenever $M$ is a finite $R$-module satisfying
$$\Hom_R (M, k[x_1]_{x_1- \lambda_0} / k[x_1]) \neq 0,$$ which is easily seen to hold if, e.g., $M$ is free or $M= k[x_1, \ldots,x_r]$ for $1\leq r \leq n$. In particular, taking $M=R$ we get that $H_I^1(N)\neq 0$.

To prove the claim, suppose by way of contradiction that $H^1_I (M,N) =0$ for any $M$ as above. Then, $H^1_I (M, k[x_1]_{x_1-\lambda})= 0$ for all $\lambda \in k^{\ast}$. Also note that, because clearly $H^0_I(k[x_1]_{x_1-\lambda})=0$, we have $H^0_I (M, k[x_1]_{x_1-\lambda})\cong \Hom_R (M, H^0_I (k[x_1]_{x_1-\lambda})) =0$. Now, from the short exact sequence $$\xymatrix@=1em{
0\ar[r] &  k[x_1]\ar[r] & k[x_1]_{x_1-\lambda}\ar[r] & k[x_1]_{x_1-\lambda}/ k[x_1]\ar[r] & 0}$$
we derive isomorphisms
$$H^1_I (M, k[x_1]) \cong H^0_I (M, k[x_1]_{x_1-\lambda}/ k[x_1]) \cong \Hom_R (M, H^0_I (k[x_1]_{x_1-\lambda}/ k[x_1])).$$ If we pick any $\lambda \neq \lambda_0$, we immediately deduce $H^0_I (k[x_1]_{x_1-\lambda}/ k[x_1]) =0$, which shows that $H^1_I (M, k[x_1])=0$. Now, taking $\lambda = \lambda_0$ (in which case the module $k[x_1]_{x_1- \lambda_0} / k[x_1]$ is $I$-torsion), we get
 $$\Hom_R (M, k[x_1]_{x_1- \lambda_0} / k[x_1])\cong H^1_I (M, k[x_1])= 0.$$ This contradiction proves the claim and closes the paper.
 
 
 \end{remark}



\bigskip

\noindent{\bf Acknowledgements.} The first and second-named authors were supported by CAPES Doctoral Scholarships. The third-named author was partially supported by the CNPq-Brazil grants 301029/2019-9 and 406377/2021-9.

\end{document}